\documentclass[12pt, a4paper, leqno]{amsart}
\usepackage[utf8]{inputenc}
\usepackage{amsmath}
\usepackage{amsfonts}
\usepackage{amssymb}
\usepackage{times}
\usepackage[T1]{fontenc} 
\usepackage{url} 
\usepackage{color,esint}
\usepackage[colorlinks]{hyperref} 
\usepackage{mdframed}
\usepackage{pgf,tikz,pgfplots}
\pgfplotsset{compat=1.14}
\usepackage{mathrsfs}
\usetikzlibrary{arrows}

\setlength{\oddsidemargin}{0.5cm}
\setlength{\evensidemargin}{0.5cm}
\setlength{\textwidth}{15.5cm}
\setlength{\topmargin}{0cm}
\setlength{\textheight}{24cm} 
\setlength{\marginparwidth}{2.5cm}
\let\oldmarginpar\marginpar
\renewcommand\marginpar[1]{\-\oldmarginpar[\raggedleft\footnotesize #1]%
{\raggedright\footnotesize #1}}

\usepackage{amsthm}
\theoremstyle{plain}
\newtheorem{thm}[equation]{Theorem}
\newtheorem{lem}[equation]{Lemma}

\newtheorem{cor}[equation]{Corollary}

\theoremstyle{definition}
\newtheorem{defn}[equation]{Definition}

\theoremstyle{remark}

\numberwithin{equation}{section}

\newcommand{\R}{\mathbb{R}}

\newcommand{\Rn}{\mathbb{R}^n}

\def\essinf{\operatornamewithlimits{ess\,inf}}
\def\essliminf{\operatornamewithlimits{ess\,\lim\,inf}}
\def\supp{\operatornamewithlimits{supp}}

\def\le{\leqslant}
\def\leq{\leqslant}
\def\ge{\geqslant}
\def\geq{\geqslant}
\def\rho{\varrho}
\def\vartheta{\theta}
\renewcommand{\phi}{\varphi}
\renewcommand{\epsilon}{\varepsilon}

\def\lip{\operatorname{Lip}}
\def\supp{\operatorname{supp}}

\def\diam{\qopname\relax o{diam}}

\def\spt{\qopname\relax o{spt}}

\def\phix{{\phi(\cdot)}}

\def\loc{{\rm loc}}
\def\ve{\varepsilon}

\newcommand{\ainc}[1]{{{\normalfont(aInc){\ensuremath{_{#1}}}}}}
\newcommand{\adec}[1]{{{\normalfont(aDec){\ensuremath{_{#1}}}}}}

\newcommand{\azero}{{{\normalfont(A0)}}}
\newcommand{\aone}{{{\normalfont(A1)}}}
\newcommand{\aonen}{{{\normalfont(A1-$n$)}}}

\date{\today}

\usepackage{url}

\usepackage{color}
\definecolor{blau}{rgb}{0.1,0.0,0.9}

\newcounter{komcounter}
\numberwithin{komcounter}{section}

\begin{document}

\title{The Kellogg property under generalized growth conditions}
\author{Petteri Harjulehto}
\address{P.\ Harjulehto: 
Department of Mathematics and Statistics,
FI-20014 University of Turku, Finland}
\email{petteri.harjulehto@utu.fi}

\author{Jonne Juusti}
\address{J.\ Juusti: 
Department of Mathematics and Statistics,
FI-20014 University of Turku, Finland}
\email{jthjuu@utu.fi}

\begin{abstract}
We study minimizers  of the Dirichlet $\phi$-energy integral with generalized Orlicz growth. We prove the Kellogg property, the set of irregular points has zero capacity, and give characterizations of semiregular boundary points. The results are new ever for the special cases double phase and Orlicz growth.
\end{abstract}

\keywords{Dirichlet energy integral, irregular boundary point, semiregular boundary point,  minimizer, superminimizer,  generalized Orlicz space, Musielak--Orlicz spaces, nonstandard growth, variable exponent, double phase}

\subjclass[2010]{49N60, 35J67, 35J20}

\maketitle

\section{Introduction}

We study minimizers of the Dirichlet energy integral in a bounded domain $\Omega \subset \Rn$ with  boundary values:
\[
\inf \int_\Omega \phi(x,|\nabla u|) \, dx,
\]
where the integral is taken over all $u \in W^{1,\phix}(\Omega)$ with $u - f \in W_0^{1,\phix}(\Omega)$.
We assume that strictly convex $\phi$ has the generalized Orlicz growth and satisfies \azero{}, \aone{}, \aonen{}, \ainc{} and \adec{}. These conditions for the generalized Orlicz function are widely used, see for example \cite{HarHT17, Has15, HasO19, Kar18, YanYY19}.
Our results include as special cases  the constant exponent case $\phi(x,t) = t^p$, the Orlicz case $\phi(x,t) = \phi(t)$, the variable exponent case $\phi(x,t) = t^{p(x)}$, and the double phase case $\phi(x,t) = t^p + a(x) t^q$.
Boundary regularity has been recently studied in the variable exponent case for example in \cite{AdaBB14, AdaL16, LatLT11, Luk10, RagT16, TacU17}, in Orlicz case for example in \cite{Gro02, KriM10, Ok18}, in double phase case in \cite{BarKLU18}, and in the generalized Orlicz case in \cite{HarH19, Kar_pp}.
Note the survey \cite{Chl18}, that includes more references of variational problems and partial differential equations of this type.
We also mention the books \cite{CruF13}, that presents the fundamentals of variable Lebesgue spaces and how they relate to harmonic analysis, and \cite{YanLK17}, that surveys the theory of Musielak-Orlicz Hardy spaces.

Let $f \in C(\partial \Omega)$ be a boundary value function and $H_f$ the corresponding (continuous) 
minimizer, see Section~\ref{sec:regular} for definitions. A boundary point $x \in \partial \Omega$ is regular if  $\lim_{y \to x, y \in \Omega} H_f(y) = f(x)$ for all $f$.
Otherwise  the boundary point is irregular. An irregular boundary point is semiregular if the limit exists for all $f$. Precise definitions can be found from Definitions~\ref{defn:regular} and \ref{defn:semiregular}.
Our main goal is to prove the Kellogg property: the set of irregular boundary points has zero capacity.
This is our Theorem \ref{thm:main}. In the variable exponent case this was first proved in \cite{LatLT11}, and later with a different proof in \cite{AdaBB14}. Then we prove characterizations of semiregular boundary points, Theorem~\ref{thm:semireg}, showing for example that the boundary point $x_0$ is semiregular if and only if  it has a neighbourhood $V$ such that capacity of $V \cap \partial \Omega$ is zero. In the variable exponent case these have been proved in 
\cite{AdaBB14}. In this paper we use ideas from \cite{AdaBB14}.
To best of our knowledge, our results are new in even the special cases of the double phase growth and the Orlicz growth.


\section{Preliminaries}
\label{sect:elementary} 

Throughout this paper, we assume that $\Omega \subset \Rn$ is a bounded domain, i.e. an open and connected set.
The following definitions are as in \cite{HarH19b}, which we use as a general reference to background theory in generalized Orlicz spaces.

\begin{defn}
We say that $\phi: \Omega\times [0, \infty) \to [0, \infty]$ is a 
\textit{weak $\Phi$-function}, and write $\phi \in \Phi_w(\Omega)$, if 
the following conditions hold
\begin{itemize}
\item For every $t \in [0, \infty)$ the function $x \mapsto \phi(x, t)$ is measurable and for every $x \in \Omega$ the function $t \mapsto \phi(x, t)$ is non-decreasing.
\item $\displaystyle \phi(x, 0) = \lim_{t \to 0^+} \phi(x,t) =0$  and $\displaystyle \lim_{t \to \infty}\phi(x,t)=\infty$ for every $x\in \Omega$.
\item The function $t \mapsto \frac{\phi(x, t)}t$ is 
$L$-almost increasing for $t>0$ uniformly in $\Omega$. "Uniformly" means that $L$ 
is independent of $x$.
\end{itemize}
If $\phi\in\Phi_w(\Omega)$ is additionally convex and left-continuous, then $\phi$ is a 
\textit{convex $\Phi$-function}, and we write $\phi \in \Phi_c(\Omega)$.
\end{defn}

Two functions $\phi$ and $\psi$ are \textit{equivalent}, 
$\phi\simeq\psi$, if there exists $L\ge 1$ such that 
$\psi(x,\frac tL)\le \phi(x, t)\le \psi(x, Lt)$ for every $x \in \Omega$ and every $t>0$.
Equivalent $\Phi$-functions give rise to the same space with 
comparable norms.

\subsection*{Assumptions}\label{sect:assumptions}

Let us write $\phi^+_B (t) := \sup_{x \in B} \phi(x, t)$ and $\phi^-_B (t) := \inf_{x \in B} \phi(x, t)$; and abbreviate $\phi^\pm := \phi^\pm_\Omega$.
We state some assumptions for later reference. 

\begin{itemize}
\item[(A0)]
There exists $\beta \in(0,1)$ such that $\phi(x, \beta) \le 1 \le \phi(x,1/\beta)$ for almost every $x$.
\item[(A1)]\label{defn:a1}
There exists $\beta\in (0,1)$ such that,
for every ball $B$ and a.e.\ $x,y\in B \cap \Omega$,
\[
\beta \phi^{-1}(x, t) \le \phi^{-1} (y, t) 
\quad\text{when}\quad 
t \in \bigg[1, \frac{1}{|B|}\bigg].
\]
\item[(A1-$n$)]
There exists $\beta\in (0,1)$ such that, for every ball $B\subset \Omega$, 
\[
\phi^+_B (\beta t) \le  \phi^-_B (t)
\quad\text{when}\quad
t \in \big[1, \tfrac1{\diam (B)}\big].
\]
\item[(A2)]
For every $s > 0$ there exist $\beta \in (0, 1]$ and $h \in L^1(\Omega) \cap L^\infty (\Omega)$ such that
\[
\beta \phi^{-1}(x,t) \leq \phi^{-1}(y,t)
\]
for almost every $x, y \in \Omega$ and every $t \in [h(x) + h(y), s]$.
\item[(aInc)] 
There exist $p>1$ and $L\ge 1$ such that $t \mapsto \frac{\phi(x,t)}{t^{p}} $ is $L$-almost increasing in $(0,\infty)$.
\item[(aDec)] 
There exist $q>1$ and $L\ge 1$ such that $t \mapsto \frac{\phi(x,t)}{t^{q}} $ is $L$-almost decreasing in $(0,\infty)$.
\end{itemize}
We write (Inc) 
if the ratio is increasing rather than just almost increasing, similarly for (Dec).

We say that $\phi$ is \textit{doubling} if there exists a constant $L\ge 1$ such that $\phi(x, 2t) \le L \phi(x, t)$ for every $x \in \Omega$ and every $t>0$.
By Lemma~2.2.6 of \cite{HarH19b} doubling is equivalent to (aDec).
If $\phi$ is doubling with constant $L$, then 
by iteration
\begin{equation}\label{equ:doubling_iteration}
\phi(x,t) \le L^2 \Big( \frac{t}{s}\Big)^{Q} \phi(x,s) 
\end{equation}
for every $x \in \Omega$ and every $0<s<t$, where $Q= \log_2(L)$, e.g.\ \cite[Lemma~3.3]{BjoB11}. If $\phi$ is doubling, then \eqref{equ:doubling_iteration} yields that $\simeq$ implies $\approx$.
On the other hand, $\approx$ always implies $\simeq$ since the function $t \mapsto \frac{\phi(x, t)}t$ is 
almost increasing;
hence $\simeq$ and $\approx$ are equivalent in the doubling case. Note that doubling also yields that $\phi(x, t+s)\lesssim \phi(x, t) + \phi(x, s)$.

Assumptions (A0) and (aDec) imply that 
$\phi(x,1) \lesssim \beta^{-q} \phi(x,\beta) \le  \beta^{-q}$
 and $\phi(x, 1) \gtrsim \beta^q \phi(x, 1/\beta) \ge \beta^q$, and thus $\phi(x,1)\approx 1$. 
If $\phi \in \Phi_c(\Omega)$ satisfies (aDec), then $\phi(x,t)$ is finite for every $x \in \Omega$ and $t \geq 0$, and convexity implies that $\phi$ is continuous.
The conditions (A1) and (A1-$n$) can be used also in cubes instead of balls, see 
Lemmas~2.10 and 2.11 in \cite{HarHT17}.

The next table contains a interpretation of
the assumptions for four $\Phi$-functions.  The table is a combination of tables from \cite[Table 7.1]{HarH19b} and
\cite{HarHT17}.

\medskip
\centerline{
\begin{tabular}{l|ccccc}
$\phi(x,t)$ & (A0)& (A1)& (A1-$n$)& (aInc) & (aDec) \\
\hline
$t^{p(x)}a(x)$ & $a\approx 1$ & $\frac1p\in C^{\log}$ & $\frac1p\in C^{\log}$ & $p^->1$ & $p^+<\infty$ \\
$t^{p(x)}\log(e+t)$ & $\text{true}$ & $\frac1p\in C^{\log}$ & $\frac1p\in C^{\log}$ & $p^->1$ & $p^+<\infty$ \\
$t^p + a(x) t^q$ & $a\in L^\infty$ & $a\in C^{0, \frac np (q-p)}$ & $a\in C^{0, q-p}$ & $p>1$ & $q<\infty$ \\
$\phi(t)$ & \text{true} & \text{true} & \text{true} & \text{same} & \text{same} \\
\end{tabular}
}

\subsection*{Generalized Orlicz spaces}\label{sect:genOrlicz}

We recall some definitions. 
We denote by $L^0(\Omega)$ the set of measurable 
functions in $\Omega$.  

\begin{defn}\label{def:Lphi}
Let $\phi \in \Phi_w(\Omega)$ and define the \textit{modular} 
$\varrho_\phix$ for $f\in L^0(\Omega)$ by 
  \begin{align*}
    \varrho_\phix(f) &:= \int_\Omega \phi(x, |f(x)|)\,dx.
  \end{align*}
The \emph{generalized Orlicz space},
also called Musielak--Orlicz space,
is defined as the set 
  \begin{align*}
    L^\phix(\Omega) &:= \big\{f \in L^0(\Omega) \colon
      \lim_{\lambda \to 0^+} \varrho_\phix(\lambda f) = 0\big\}
  \end{align*}
equipped with the (Luxemburg) norm 
  \begin{align*}
    \|f\|_{L^\phix(\Omega)} &:= \inf \Big\{ \lambda>0 \colon \varrho_\phix\Big(
      \frac{f}{\lambda}  \Big) \leq 1\Big\}.
  \end{align*}
If the set is clear from the context we abbreviate $\|f\|_{L^\phix(\Omega)}$ by $\|f\|_{\phix}$.
\end{defn}

H\"older's inequality holds in generalized Orlicz spaces with a constant $2$, without restrictions on the $\Phi_w$-function
\cite[Lemma~3.2.13]{HarH19b}:
\[
\int_\Omega |f|\, |g|\, dx \le 2 \|f\|_\phix \|g\|_{\phi^*(\cdot)}.
\]

\begin{defn}
A function $u \in L^\phix(\Omega)$ belongs to the
\textit{Orlicz--Sobolev space $W^{1, \phix}(\Omega)$} if its weak partial derivatives $\partial_1 u, \ldots, \partial_n u$ exist and belong to the space $L^{\phix}(\Omega)$.
For $u \in W^{1,\phix}(\Omega)$, we define the norm
\[
\| u \|_{W^{1,\phix}(\Omega)} := \| u \|_{\phix} + \| \nabla u \|_{\phix}.
\]
Here $\| \nabla u \|_{\phix}$ is a shortening of $\big{\|} | \nabla u | \big{\|}_{\phix}$.
By \cite[Lemma~6.1.5]{HarH19b} the definition above is valid.
Again, if $\Omega$ is clear from the context, we abbreviate $\| u \|_{W^{1,\phix}(\Omega)}$ by $\| u \|_{1,\phix}$.
\end{defn}

To study boundary value problems, we need a concept of weak boundary 
value spaces. 

\begin{defn}
$W^{1,\phi(\cdot)}_0 (\Omega)$ is the closure of $C^\infty_0(\Omega)$ in 
$W^{1,\phi(\cdot)} (\Omega)$. 
\end{defn}

\subsection*{Capacity and fine properties of functions}\label{sect:capacity}
Fine properties of Sobolev functions can be studied by different capacities. 
Here we use the generalized Orlicz $\phix$-capacity defined as follows. 
 
\begin{defn} 
Let $E \subset \Rn$. Then the \emph{generalized Orlicz $\phix$-capacity of $E$} is defined 
by 
\[
C_\phix (E) := \inf_{u\in S_\phix(E)} \int_{\Rn} \phi(x,|u|) + \phi(x, |\nabla u|) \, dx,
\]
where the infimum is taken over the set $S_\phix(E)$ of all functions 
$u \in W^{1, \phix} (\Rn)$ with $u \ge 1$ in an open set containing $E$.
\end{defn}

If $\phi \in \Phi_c(\Rn)$ satisfies (aDec) and (aInc), then capacity has the following properties, see \cite[Section~3]{BarHH18}.

 \begin{enumerate}
  \item[(C1)] $C_{\phix}(\emptyset)=0$.
  \item[(C2)] If $E_1\subset E_2 \subset \Rn$, then $C_{\phix}(E_1)\leqslant
    C_{\phix}(E_2)$.
  \item[(C3)] If $E \subset \Rn$, then
    \begin{equation*}
      C_{\phix}(E)=\inf_{\substack{E\subset U\\ 
          U\ \mathrm{open}}}C_{\phix}(U).
    \end{equation*}
  \item[(C4)] If $E_1, E_2 \subset \Rn$, then
    \begin{equation*}
      C_{\phix}(E_1\cup E_2)+C_{\phix}(E_1\cap E_2)\leqslant
      C_{\phix}(E_1)+C_{\phix}(E_2).
    \end{equation*}
  \item[(C5)] If $K_1\supset K_2\supset\cdots$ are compact sets, then
    \begin{equation*}
      \lim_{i\to\infty}C_{\phix}(K_i)
      =C_{\phix}\big(\cap_{i=1}^{\infty} K_i\big).
    \end{equation*}
\item[(C6)] 
For $E_1\subset E_2\subset\cdots\subset\Rn$, 
  \begin{equation*}
    \lim_{i\to\infty}C_{\phix}(E_i)=C_{\phix}\big(\cup_{i=1}^{\infty}E_i\big).
  \end{equation*}
\item[(C7)] 
For $E_i\subset \Rn$, 
  \begin{equation*}
    C_{\phix}\big(\cup_{i=1}^{\infty}E_i\big)\leq\sum_{i=1}^{\infty}C_{\phix}(E_i).
  \end{equation*}
 \end{enumerate}

A  function $f: \Omega \rightarrow[-\infty, \infty]$ is 
\textit{$\phix$-quasicontinuous} if for every 
$\epsilon>0$ there exists an open set $U$ such that $C_{\phix}(U)<\epsilon$ and 
$f|_{\Omega\setminus U}$ is continuous.
We say that a claim holds \emph{$\phix$-quasieverywhere} if it holds everywhere 
except in a set of $\phix$-capacity zero.

Suppose that $u$ can be approximated by continuous functions in  $W^{1,\phix}(\Omega)$.
Then a standard argument (e.g. \cite[Theorem~11.1.3]{DieHHR11}) shows that every $u \in W^{1,\phix}(\Omega)$ has a representative, which is quasicontinuous in $\Omega$, provided that $\phi \in \Phi_c(\Omega)$ satisfies (aInc) and (aDec).
By \cite[Theorem~6.4.7]{HarH19b}, smooth functions are dense in $W^{1,\phix}(\Omega)$, if $\phi$ satisfies (A0), (A1), (A2) and (aDec).
By \cite[Lemma~4.2.3]{HarH19b}, (A2) is not needed, if $\Omega$ is bounded.
Hence we get the following lemma.

\begin{lem}\label{lem:density}
Let $\phi \in \Phi_c(\Omega)$ satisfy (A0), (A1) and (aDec).
Then for every $u \in W^{1, \phix}(\Omega)$, there exists a sequence of function from $C^\infty(\Omega) \cap W^{1, \phix}(\Omega)$ converging to $u$ in $W^{1, \phix}(\Omega)$.
\end{lem}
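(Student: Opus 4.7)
The lemma is a direct corollary of two results in the reference monograph \cite{HarH19b}, and the plan is simply to combine them.

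First, I would invoke \cite[Theorem~6.4.7]{HarH19b}, which provides the density of $C^\infty(\Omega)\cap W^{1,\phix}(\Omega)$ in $W^{1,\phix}(\Omega)$ under the four hypotheses \azero{}, \aone{}, \atwo{} and \adec{}. Our hypotheses match all of these except for \atwo{}, so the only thing left to justify is why \atwo{} may be dispensed with in the present setting.

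Second, I would appeal to \cite[Lemma~4.2.3]{HarH19b}, which states precisely that for bounded domains \atwo{} is not needed in density statements of this kind. The intuition is that \atwo{} is a long-range uniformity condition designed to handle unbounded domains; when $\Omega$ is bounded, the mollifier argument underlying Theorem~6.4.7 only requires the local uniformity already encoded in \azero{} and \aone{}. Since $\Omega \subset \Rn$ is bounded by the standing convention of the paper, the reduction applies and the lemma follows.

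I do not expect any genuine obstacle: the argument amounts to bookkeeping that combines the two cited results. The only minor point that merits a check is that the approximating sequence produced by the construction of \cite[Theorem~6.4.7]{HarH19b}, which typically proceeds by convolution with a mollifier along a Whitney-type exhaustion of $\Omega$, indeed lies in $C^\infty(\Omega)\cap W^{1,\phix}(\Omega)$ rather than in the smaller space $C^\infty_0(\Omega)$ or the larger $C^\infty(\overline{\Omega})$. This is automatic from the construction, so nothing further is required.
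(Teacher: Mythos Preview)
Your proposal is correct and mirrors the paper's own argument exactly: the paper derives the lemma from \cite[Theorem~6.4.7]{HarH19b} together with \cite[Lemma~4.2.3]{HarH19b}, using the standing assumption that $\Omega$ is bounded to drop \atwo{}. Your additional remarks about the nature of the approximating sequence are harmless commentary, not needed for the argument.
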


If $u \in W^{1, \phix}_0(D)$ and  $D \subset \Omega$, then the zero extension of 
$u$ belongs to $W^{1, \phix}(\Omega)$ since $u$ can be approximated 
by $C^\infty_0(D)$-functions. The next lemma concerns the opposite implication. 

\begin{lem}[Theorem~2.10 in \cite{HarH19}]\label{lem:nolla_ulkopuolella}
Let $\phi \in \Phi_c(\Omega)$ satisfy (A0), (A1), (aInc) and (aDec) and let $D \Subset \Omega$ be open.
If $u \in W^{1, \phix}(\Omega)$ and $u=0$ in $\Omega\setminus D$, 
then $u \in W^{1, \phix}_0(D)$.
Moreover, if $u$ is non-negative, then there exist non-negative $u_i\in W^{1, \phix}_0(D)$ 
with $\spt u_i \Subset D$, $\{u_i \neq 0\} \subset \{u \neq 0\}$ and $u_i\to u$ in $W^{1, \phix}(D)$.
\end{lem}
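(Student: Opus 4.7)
The plan is to prove both conclusions simultaneously by an explicit truncation construction, reducing first to the case of a non-negative $u$. If $u \in W^{1,\phix}(\Omega)$ is arbitrary with $u=0$ in $\Omega \setminus D$, then $u^+ = \max\{u,0\}$ and $u^- = \max\{-u,0\}$ both belong to $W^{1,\phix}(\Omega)$ by the chain rule (which is available because \ainc{} and \adec{} give doubling), each vanishes on $\Omega\setminus D$, and $u = u^+ - u^-$. Since $W^{1,\phix}_0(D)$ is a vector space, it suffices to establish the ``moreover'' part for non-negative functions.

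So assume $u \ge 0$ and define, for $i \in \N$,
\[
u_i := \max\Big\{u - \tfrac{1}{i},\ 0\Big\}.
\]
The chain rule gives $u_i \in W^{1,\phix}(\Omega)$ with $\nabla u_i = \chi_{\{u > 1/i\}} \nabla u$, so trivially $\{u_i \neq 0\} = \{u > 1/i\} \subset \{u\neq 0\}$, and moreover $|\nabla u_i| \le |\nabla u|$ pointwise while $u_i \nearrow u$ a.e. The core step is to show $\spt u_i \Subset D$. For this I would work with the quasicontinuous representative $\tilde u$ of $u$, which exists under \azero{}, \aone{}, \ainc{}, \adec{} by the remarks preceding Lemma~\ref{lem:density}. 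Since $u = 0$ a.e.\ on the open set $\Omega \setminus D$, a standard argument using the fact that two quasicontinuous functions agreeing a.e.\ also agree quasieverywhere shows $\tilde u = 0$ $\phix$-quasieverywhere on $\Omega\setminus D$. Fix $\varepsilon>0$, let $N \subset \Omega\setminus D$ be the exceptional set of capacity zero, and pick an open set $U \supset N$ with $C_\phix(U) < \varepsilon$; then $\tilde u$ is continuous on $\Omega \setminus U$ and vanishes on $(\Omega \setminus D) \setminus U$. The level set $\{\tilde u \ge 1/i\} \setminus U$ is therefore a relatively closed subset of $\Omega \setminus U$ that is disjoint from $\partial D$, so its closure (in $\Rn$) lies in $D$; letting $\varepsilon\to 0$ and using $D \Subset \Omega$ shows that $\spt u_i$, modulo a capacity-zero set, is compactly contained in $D$. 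A small mollification (or perturbation by a cutoff) then provides the genuine compact containment.

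The convergence $u_i \to u$ in $W^{1,\phix}(\Omega)$ now follows from dominated convergence in the modular: $|u_i - u| \le \tfrac{1}{i}\chi_{\{u > 0\}} + u \chi_{\{u \le 1/i\}}$ and $|\nabla u_i - \nabla u| = \chi_{\{u \le 1/i\}}|\nabla u|$ are bounded pointwise by integrable envelopes $\phi(x,|u|)$ and $\phi(x,|\nabla u|)$ (up to the doubling constant, which handles the factor $2$ in $|u_i - u|$). The norm convergence is then equivalent to modular convergence under \adec{}. Finally, each $u_i$ has support compactly contained in $D$, so Lemma~\ref{lem:density} applied on a slightly larger neighbourhood together with a standard cut-off/mollification shows $u_i \in W^{1,\phix}_0(D)$; hence $u \in W^{1,\phix}_0(D)$.

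The main obstacle is the support assertion $\spt u_i \Subset D$. The truncation $u_i$ only vanishes a.e.\ on $\Omega \setminus D$, not necessarily on its closure, so some fine-property information (quasicontinuity and the fact that q.e.-vanishing transfers to the representative) is indispensable; without \aone{} and the resulting availability of quasicontinuous representatives this step would fail. Everything else — the chain rule, dominated convergence, and the final mollification — is routine in doubling generalized Orlicz spaces.
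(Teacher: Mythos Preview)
The paper does not prove this lemma; it is quoted as Theorem~2.10 of \cite{HarH19}, so there is no proof here to compare against.

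Your argument has a genuine gap at exactly the step you flag as ``the main obstacle'': the claim that $u_i := (u - \tfrac1i)_+$ satisfies $\spt u_i \Subset D$ is false in general. In the classical case $\phi(x,t) = t^n$ with $n \ge 2$, take $D = B(0,1) \Subset \Omega = B(0,2)$ and $u = \sum_k v_k$, where each $v_k$ is a spike of height $1$ supported in a small ball $B(x_k, r_k) \subset D$ with $x_k \to x_0 \in \partial D$; since points have zero $n$-capacity, the radii $r_k$ can be chosen so that $\sum_k \|v_k\|_{W^{1,n}} < \infty$. Then $u \in W^{1,n}(\Omega)$ vanishes on $\Omega \setminus D$, yet $\{u > \tfrac12\}$ accumulates at $x_0$, so $\spt(u - \tfrac12)_+$ contains $x_0 \in \partial D$. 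Your quasicontinuity argument shows only that the level set misses $\partial D$ outside an open set $U$ of small capacity; letting $C_\phix(U) \to 0$ does not control closures, and the appeal to ``a small mollification or cutoff'' does not help (mollification enlarges supports; a cutoff introduces the term $u\,\nabla\eta$, which you have no way to make small). Since your proof of $u \in W^{1,\phix}_0(D)$ also rests on these same $u_i$, both conclusions remain unproven. The standard repair is to first establish $u \in W^{1,\phix}_0(D)$ by an independent argument, then take non-negative $\psi_j \in C_0^\infty(D)$ converging to $u$ and set $u_j := \min\{u,\psi_j\}$: this is non-negative, has $\spt u_j \subset \spt\psi_j \Subset D$ and $\{u_j \neq 0\} \subset \{u \neq 0\}$, and converges to $u$ because $|u-u_j| + |\nabla u - \nabla u_j| \le |u - \psi_j| + |\nabla u - \nabla \psi_j|$ pointwise.
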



\section{Regular boundary points}\label{sec:regular}

\begin{defn}
Let $\phi \in \Phi_w(\Omega)$ and $f \in W^{1, \phix}(\Omega)$. 
We say that $u\in W^{1, \phix}(\Omega)$ is a \textit{minimizer} with boundary values $f\in W^{1, \phix}(\Omega)$ if $u-f \in W^{1, \phix}_0(\Omega)$ and
\begin{equation}\label{equ:min}
\int_\Omega \phi(x, |\nabla u|) \, dx \le \int_\Omega \phi(x, |\nabla (u + v)|) \, dx
\end{equation}
for all $v\in W^{1, \phix}_0(\Omega)$. 

If the inequality is assumed only for all non-negative or non-positive $v$, then
$u$ is called a \emph{superminimizer} or  \emph{subminimizer}, respectively.
\end{defn}

In the next lemma, we show that in some cases the set $W_0^{1,\phix}(\Omega)$ in the definition above can be replaced with $C_0^\infty(\Omega)$.

\begin{lem}\label{lem:min_cont}
Let $\phi \in \Phi_c$ satisfy (A0), (A1) (aInc) and (aDec).
Then $u$ is a minimizer, if and only if inequality \eqref{equ:min} holds for all $v \in C_0^\infty(\Omega)$.
The function $u$ is a superminimizer (subminimizer), if and only if \eqref{equ:min} holds for all positive (negative) $v \in C_0^\infty(\Omega)$.
\end{lem}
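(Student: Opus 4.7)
The forward direction is trivial: since $C_0^\infty(\Omega)\subset W_0^{1,\phix}(\Omega)$, any minimizer satisfies \eqref{equ:min} against every $v\in C_0^\infty(\Omega)$, and likewise for super- and subminimizers with non-negative or non-positive test functions. The content of the lemma is the converse, which I would prove by density.

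Suppose \eqref{equ:min} holds for all $v\in C_0^\infty(\Omega)$ and fix an arbitrary $w\in W_0^{1,\phix}(\Omega)$. By definition, there is a sequence $v_i\in C_0^\infty(\Omega)$ with $v_i\to w$ in $W^{1,\phix}(\Omega)$. Testing the hypothesis against $v_i$, the only nontrivial task is to pass to the limit in the right-hand side of
\[
\int_\Omega \phi(x,|\nabla u|)\,dx \le \int_\Omega \phi(x,|\nabla(u+v_i)|)\,dx.
\]
Since $\phi$ is doubling by \adec{}, the $L^\phix$-convergence $\nabla v_i\to\nabla w$ upgrades to modular convergence $\varrho_\phix(\nabla v_i-\nabla w)\to 0$, and in particular $\{\phi(x,|\nabla v_i|)\}_i$ is uniformly integrable on $\Omega$. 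Doubling again yields
\[
\phi(x,|\nabla(u+v_i)|) \lesssim \phi(x,|\nabla u|) + \phi(x,|\nabla v_i|),
\]
so the family $\{\phi(x,|\nabla(u+v_i)|)\}_i$ is uniformly integrable too. Combined with a.e.\ convergence of a subsequence of $\nabla v_i$ to $\nabla w$ and continuity of $t\mapsto\phi(x,t)$ (which holds under convexity together with \adec{}), Vitali's convergence theorem gives convergence of the modulars, and hence \eqref{equ:min} for $w$.

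For the super- and subminimizer cases the approximation must preserve the sign of $w$. Assume $w\in W_0^{1,\phix}(\Omega)$ with $w\geq 0$ and let $v_i\in C_0^\infty(\Omega)$ with $v_i\to w$ in $W^{1,\phix}(\Omega)$. Set $w_i:=\max(v_i,0)$. The truncation map $v\mapsto v^+$ is continuous on $W^{1,\phix}(\Omega)$ under \ainc{} and \adec{} via the standard chain-rule argument, so $w_i\to w^+=w$ in $W^{1,\phix}$; each $w_i$ is non-negative, Lipschitz, and compactly supported in $\Omega$. Mollifying $w_i$ with a non-negative standard mollifier at scale $\epsilon_i<\dist(\supp w_i,\partial\Omega)$ yields $\tilde w_i\in C_0^\infty(\Omega)$ with $\tilde w_i\geq 0$, and a diagonal choice of $\epsilon_i$ gives $\tilde w_i\to w$ in $W^{1,\phix}(\Omega)$; here one uses that mollification converges in $W^{1,\phix}$ on compactly supported functions, which is valid under \azero{} and \aone{} in the spirit of Lemma~\ref{lem:density}. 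Running the modular-convergence argument of the previous paragraph with $\tilde w_i$ in place of $v_i$ gives the superminimizer statement, and the subminimizer case is symmetric with $\min(v_i,0)$ in place of $\max(v_i,0)$.

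The main obstacle is precisely this sign-preserving approximation, since the sequence furnished by the definition of $W_0^{1,\phix}$ has no definite sign; the remedy is the truncation plus mollification step, whose compatibility with $W^{1,\phix}$-convergence rests on the generalized Orlicz analogues of the $W^{1,p}$ chain rule and mollifier convergence. Everything else is a routine density argument combined with Vitali's theorem, powered throughout by the doubling property \adec{}.
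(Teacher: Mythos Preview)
Your proposal is correct and follows essentially the same strategy as the paper: density of $C_0^\infty(\Omega)$ in $W_0^{1,\phix}(\Omega)$ combined with modular convergence for the minimizer case, and a sign-preserving smooth approximation for the super-/subminimizer cases. The technical implementations differ slightly in two places. For the passage to the limit in the energy, the paper avoids Vitali by invoking \cite[Lemma~3.1.6]{HarH19b} directly, after observing that $\varrho_\phix(|\nabla(u+w)|-|\nabla(u+v_i)|)\to 0$; your uniform-integrability route via doubling and Vitali is an equally valid alternative. For the non-negative approximation, the paper does not truncate the $C_0^\infty$ sequence but instead appeals to Lemma~\ref{lem:nolla_ulkopuolella} to obtain non-negative $\tilde w_i\in W_0^{1,\phix}(\Omega)$ with $\spt\tilde w_i\Subset\Omega$ and $\{\tilde w_i\neq 0\}\subset\{w\neq 0\}$, and then mollifies those; your truncation-then-mollify argument is more elementary in that it bypasses Lemma~\ref{lem:nolla_ulkopuolella}, at the cost of needing the continuity of $v\mapsto v^+$ on $W^{1,\phix}(\Omega)$, which (as you indicate) follows from the chain rule together with dominated convergence in $L^\phix$ under \adec{}. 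Both routes land on the same diagonal mollification step, justified by the density result behind Lemma~\ref{lem:density}.
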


\begin{proof}
If $u$ is a minimizer, then it is trivial that \eqref{equ:min} holds for all $v \in C_0^\infty(\Omega)$.

Suppose then that \eqref{equ:min} holds for all $v \in C_0^\infty(\Omega)$.
Let $w \in W_0^{1,\phix}(\Omega)$, and let $w_i \in C_0^\infty$ be a sequence of functions converging to $w$ in $W^{1,\phix}(\Omega)$.
Denote $w' := u+w$ and $w_i' := u+w_i$.
By \cite[Lemma~3.2.11]{HarH19b}
\[
\int_\Omega \phi(x,|\nabla w'|-|\nabla w_i'|) \,dx
\leq \max \{\big{\|} |\nabla w'|-|\nabla w_i'| \big{\|}_\phix, \big{\|} |\nabla w'|-|\nabla w_i'| \big{\|}_{\phix}^q \},
\]
where $q$ is the exponent from (aDec). Since $\big{|}|\nabla w'|-|\nabla w_i'|\big{|} \leq |\nabla (w'-w_i') |$, we get
\[
\big{\|} |\nabla w'|-|\nabla w_i'| \big{\|}_\phix \leq \|\nabla (w'-w_i') \|_\phix = \|\nabla (w-w_i) \|_\phix.
\]
Since $\|\nabla (w-w_i) \|_\phix \to 0$ as $i \to \infty$, it follows that
\[
\int_\Omega \phi(x,|\nabla w'|-|\nabla w_i'|) \,dx \to 0 \quad \text{as} \quad i \to \infty.
\]
By \cite[Lemma~2.2.6]{HarH19b} $\phi$ satisfies (Dec) (not only \adec{}). Since $\lim_{\lambda \to 0^+} \varrho_\phix(\lambda\nabla w') = 0$, (aDec) implies that $\varrho_\phix(\nabla w')$ is bounded. It now follows from \cite[Lemma~3.1.6]{HarH19b} that $|\varrho_\phix(\nabla w') - \varrho_\phix(\nabla w_i')|$ approaches zero as $i \to \infty$, and we therefore have
\[
\lim_{i \to \infty} \int_\Omega \phi(x,|\nabla w_i'|) \,dx = \int_\Omega \phi(x,|\nabla w'|) \,dx = \int_\Omega \phi(x,|\nabla (u+w)|) \,dx.
\]
By our assumption, for every $i$ we have
\[
\int_\Omega \phi(x,|\nabla u|) \,dx \leq \int_\Omega \phi(x,|\nabla w_i'|) \,dx.
\]
Combining the above estimate and limit gives
\[
\int_\Omega \phi(x,|\nabla u|) \,dx \leq \int_\Omega \phi(x,|\nabla w'|) \,dx = \int_\Omega \phi(x,|\nabla (u+w)|) \,dx,
\]
which shows that $u$ is a minimizer.

The claim regarding superminimizers is proved similarly.
The only difference is that every function in the sequence $\{w_i\}$ must be non-negative.
Suppose that $w \in W_0^{1,\phix}(\Omega)$ is non-negative.
By definition, there is a sequence of functions $w_i \in C_0^\infty(\Omega)$ converging to $w$.
But from the definition alone we can't deduce that the functions $w_i$ are non-negative.
Instead, we use Lemma~\ref{lem:nolla_ulkopuolella}:
let $\tilde{w}_i \in W_0^{1,\phix}(\Omega)$ be a sequence of non-negative functions such that $\spt \tilde{w}_i \Subset \Omega$ and $\tilde{w}_i \to w$ in $W^{1,\phix}(\Omega)$.
The proof of \cite[Theorem~6.4.7]{HarH19b} shows that for every $i$, there is a sequence of functions $\eta_{ij} \in C^\infty(\Omega) \cap W^{1,\phix}(\Omega)$ converging to $\tilde{w}_i$.
Moreover, since the functions $\eta_{ij}$ are obtained using standard mollifiers on $\tilde{w}_i$, and $\spt \tilde{w}_i \Subset \Omega$, it follows that $\eta_{ij} \in C_0^\infty(\Omega)$ and every $\eta_{ij}$ is non-negative.
For every $i$, we choose an index $j_i$ with
\[
\|\tilde{w}_i-\eta_{ij_i}\|_{1,\phix} < i^{-1}. 
\]
Then $\eta_{ij_i} \to w$ in $W^{1,\phix}(\Omega)$.
This completes the proof in the case of superminimizers.
The claim for subminimizers follows from the fact that $-u$ is a superminimizer.
\end{proof}

We denote by $H(f)$  the minimizer with boundary values $f \in W^{1, \phix} (\Omega)$. If $f: \partial \Omega \to \R$ is Lipschitz on the boundary of $\Omega$, 
then it can be, by McShane extension, extended to $\Rn$ as a bounded Lipschitz function. 
The extension of $f$ can be used in the above definition as weak boundary value, 
$u-f\in W^{1,\phix}_0(\Omega)$. 
For $g \in C(\partial \Omega)$ we define
\[
H_g(x) := \sup_{\lip(\partial\Omega) \ni f\le g} H(f)(x).
\]
This definition is based on the fact that continuous functions can be approximated by Lipschitz functions. 

The following theorem gives sufficient conditions for existence, uniqueness and continuity of minimizer with bounded boundary values.

\begin{thm}[Theorem~6.2 in \cite{HarH19}]\label{thm:H(f)}
Let $\phi \in \Phi_c(\Omega)$ satisfy (aInc) and (aDec).
Then for every function $f \in W^{1,\phix}(\Omega) \cap L^\infty(\Omega)$, there exists a minimizer $H(f)$. 

If $\phi$ is strictly convex and satisfies (A0), the minimizer is unique, and if (A1-$n$) holds, then it is continuous. 
\end{thm}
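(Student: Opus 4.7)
The plan is to follow the classical direct method in the calculus of variations, adapted to the generalized Orlicz setting, and then handle uniqueness and continuity separately.

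\textbf{Existence.} Let $I(u) := \int_\Omega \phi(x,|\nabla u|)\,dx$ and consider a minimizing sequence $u_k \in f + W_0^{1,\phix}(\Omega)$, i.e.\ $I(u_k) \to \inf I$. The first task is to show the sequence is bounded in $W^{1,\phix}(\Omega)$. For the gradient, the modulars $\rho_\phix(\nabla u_k)$ are bounded, and \ainc{} together with \adec{} give the unit-ball property translating a bounded modular into a bounded norm. Since $u_k - f \in W_0^{1,\phix}(\Omega)$, a Poincaré inequality in generalized Orlicz spaces (e.g.\ \cite[Section~6.2]{HarH19b}) controls $\|u_k-f\|_\phix$ by $\|\nabla(u_k-f)\|_\phix$, so $\{u_k\}$ is bounded in $W^{1,\phix}(\Omega)$. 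Using that \ainc{} and \adec{} imply $L^\phix$ is reflexive, I pass to a weakly convergent subsequence $u_k \rightharpoonup u$ in $W^{1,\phix}(\Omega)$. The affine set $f + W_0^{1,\phix}(\Omega)$ is weakly closed (as a translate of a closed subspace of a reflexive space), so $u - f \in W_0^{1,\phix}(\Omega)$. Finally, convexity of $t \mapsto \phi(x,t)$ (and the vector-convexity of $\xi \mapsto \phi(x,|\xi|)$) yields weak lower semicontinuity of $I$, so $I(u) \leq \liminf I(u_k) = \inf I$ and $u$ is a minimizer.

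\textbf{Uniqueness.} Suppose $u_1,u_2$ are both minimizers and set $v = \tfrac{1}{2}(u_1+u_2) \in f + W_0^{1,\phix}(\Omega)$. By convexity,
\[
I(v) \leq \tfrac{1}{2} I(u_1) + \tfrac{1}{2} I(u_2) = \inf I,
\]
so equality must hold pointwise a.e.\ in the chain $\phi(x,|\nabla v|) \leq \phi\bigl(x,\tfrac{1}{2}|\nabla u_1| + \tfrac{1}{2}|\nabla u_2|\bigr) \leq \tfrac{1}{2}\phi(x,|\nabla u_1|) + \tfrac{1}{2}\phi(x,|\nabla u_2|)$. Strict convexity of $\phi(x,\cdot)$ forces $|\nabla u_1| = |\nabla u_2|$ a.e., and the first inequality combined with the triangle inequality then forces $\nabla u_1 = \nabla u_2$ a.e. Since $u_1 - u_2 \in W_0^{1,\phix}(\Omega)$ has vanishing gradient, a Poincaré-type inequality (which is available once \azero{} is assumed, since \azero{} together with \adec{} gives $\phi(x,1) \approx 1$ and hence the embedding $W^{1,\phix}\hookrightarrow W^{1,1}$ locally) forces $u_1 = u_2$ a.e.

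\textbf{Continuity.} This is the main obstacle and the deepest ingredient, since it is not a soft functional-analytic step but rather a regularity theorem for minimizers of variational integrals with generalized Orlicz growth. The strategy is to combine local boundedness (which follows from De Giorgi iteration on level sets of $u$, using the Caccioppoli inequality for superminimizers/subminimizers together with a Sobolev inequality in $L^\phix$) with a Harnack-type oscillation decay. The role of \aonen{} is precisely to allow one to replace a ball $B$ of small radius by a $\phi_B^\pm$-like comparison, so that within $B$ the integrand behaves like an autonomous $\Phi$-function on the relevant range of gradients, which is what makes the De Giorgi/Moser iteration close up. The conditions \azero{}, \ainc{}, \adec{} give the quantitative embeddings and reverse Hölder/Sobolev inequalities needed to iterate, and the continuity statement then follows from the corresponding regularity theorem in \cite{HarH19} (or via the explicit Hölder continuity result established there). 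I would not reprove this step from scratch: the plan is to invoke the available regularity theory for minimizers under (A0), (A1-$n$), (aInc), (aDec), which is exactly what the reference \cite[Theorem~6.2]{HarH19} packages.
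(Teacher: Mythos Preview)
The paper does not prove this theorem at all: it is quoted verbatim as Theorem~6.2 of \cite{HarH19} and used as a black box, with no argument given. So there is no ``paper's own proof'' to compare against. Your outline---direct method for existence, strict convexity for uniqueness, and an appeal to the De~Giorgi-type regularity theory under \azero{}, \aonen{}, \ainc{}, \adec{} for continuity---is the standard route and is essentially how the cited reference \cite{HarH19} proceeds; in particular your final paragraph already acknowledges that the continuity step is exactly what \cite[Theorem~6.2]{HarH19} packages, so you are effectively re-citing the same source the paper does.
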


\begin{defn}\label{defn:regular}
Let $\Omega \subset \Rn$. 
We say that $x \in \partial \Omega$ is \emph{regular} if
\[
\lim_{y \to x, y \in \Omega} H_f(y) = f(x)
\]
for all $f \in C(\partial \Omega)$. A boundary point is \emph{irregular} if it is not regular.
\end{defn}

This means that the minimizer attains the boundary 
values not only in a Sobolev sense but point-wise.

We finish this section with the definition of quasiminimizers.

\begin{defn}
A function $u \in W_{\loc}^{1,\phix}$ is a \textit{local quasiminimizer} of the $\phix$-energy if there is a constant $K \geq 1$ such that
\[
\int_{\{v \neq 0\}} \phi(x,|\nabla u|) \,dx \leq K \int_{\{v \neq 0\}} \phi(x,|\nabla (u+v)|) \,dx
\]
for all $v \in W^{1,\phix}(\Omega)$ with $\spt v \subset \Omega$.

If the inequality is assumed only for all non-negative or non-positive $v$, then
$u$ is called a \emph{local quasisuperminimizer} or  \emph{local quasisubminimizer}, respectively.
\end{defn}

By \cite[Lemma~3.4]{HarHT17}, if $\phi$ satisfies (A0), (A1) and (aDec), then every $v \in W^{1,\phix}(\Omega)$ with $\spt v \subset \Omega$ belongs to $W_0^{1,\phix}(\Omega)$.
It then follows that every minimizer is also a local quasiminimizer.

\section{Quasisuperminimizer equals lsc-reguralization quasieverywhere}

\begin{lem}[Theorem 4.4 in \cite{HarH19}]\label{lem:lsc}
Let $\phi \in \Phi_c(\Omega)$ satisfy (A0), (A1-$n$), (aInc) and (aDec).
Let $u$ be a local quasisuperminimizer which is bounded from below and set
\[
u^*(x) := \essliminf_{y \to x} u(y).
\]
Then $u^*$ is lower semicontinuous and  $u= u^*$ almost everywhere. 

If $u$ is additionally locally bounded, then every point is a Lebesgue point of $u^*$.
\end{lem}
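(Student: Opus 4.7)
My plan is to handle the three assertions in order, pushing all analytic content into one appeal to a weak Harnack inequality and keeping the other steps measure-theoretic.

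For the lower semicontinuity of $u^*$, I would argue directly from the definition $u^*(x)=\sup_{r>0}\essinf_{B(x,r)}u$. Given $x_0$ and $c<u^*(x_0)$, pick $r>0$ with $\essinf_{B(x_0,r)}u>c$; for every $y\in B(x_0,r/2)$ the ball $B(y,r/2)$ lies inside $B(x_0,r)$, so $u^*(y)\ge\essinf_{B(y,r/2)}u>c$ and $\{u^*>c\}$ is open. One half of $u=u^*$ a.e., namely $u^*\le u$ at every Lebesgue point of $u$, is also purely measure-theoretic: at such a point the Lebesgue point property forces $|\{u<u(x)+\epsilon\}\cap B(x,r)|>0$ for every small $r$, so $\essinf_{B(x,r)}u\le u(x)+\epsilon$, and letting $\epsilon\to 0$ gives $u^*(x)\le u(x)$. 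Since $u\in W^{1,\phix}_{\loc}\subset L^1_{\loc}$ this inequality holds a.e.

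The reverse inequality $u\le u^*$ a.e.\ is the substantive step. The strategy is to produce a lower semicontinuous representative $\hat u$ of $u$ from the interior regularity theory for quasisuperminimizers in the generalized-Orlicz setting: under \azero{}, \aonen{}, \ainc{p} and \adec{q}, the Caccioppoli estimate together with De\,Giorgi/Moser iteration yields a weak Harnack inequality and, as a consequence, such a lsc representative (this part is in \cite{HarH19}). Once $\hat u$ is in hand, for every $x$ one has
\[
\hat u(x)\le\liminf_{y\to x}\hat u(y)\le\essliminf_{y\to x}\hat u(y)=\essliminf_{y\to x}u(y)=u^*(x),
\]
so $u=\hat u\le u^*$ almost everywhere, and combining with the previous paragraph gives $u=u^*$ a.e.

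Finally, assume in addition that $u$ is locally bounded, so that $u^*$ is a bounded lsc function with $u^*=u$ a.e. By lower semicontinuity $\essinf_{B(x_0,r)}u\to u^*(x_0)$ as $r\to 0$, and since $u\ge\essinf_{B(x_0,r)}u$ a.e.\ on $B(x_0,r)$ we get $\liminf_{r\to 0}\fint_{B(x_0,r)}u\,dy\ge u^*(x_0)$. For the matching $\limsup$, I would apply the sup-estimate half of the weak Harnack theory to the bounded nonnegative local quasisubminimizer $M-u$, where $M$ is a local essential upper bound for $u$; note that $M-u$ is a quasisubminimizer precisely because, for non-positive $w$, substituting $v=-w$ into the superminimizer inequality for $u$ gives back the subminimizer inequality for $M-u$. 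Combining the two one-sided bounds with $u^*=u$ a.e.\ forces $\fint_{B(x_0,r)}u\,dy\to u^*(x_0)$; running the same comparison on $|u-u^*(x_0)|$, which is still bounded and for which the one-sided estimates survive, upgrades this to $\fint_{B(x_0,r)}|u^*-u^*(x_0)|\,dy\to 0$, i.e., every point is a Lebesgue point of $u^*$. The main obstacle throughout is the generalized-Orlicz weak Harnack inequality for quasisuperminimizers: this is where \azero{}, \aonen{}, \ainc{p}, \adec{q} are essentially used, while all the other steps are general.
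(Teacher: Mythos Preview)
The paper does not prove this lemma at all: it is quoted verbatim as Theorem~4.4 of \cite{HarH19} and used as a black box. So there is no ``paper's own proof'' to compare against; your outline is, in effect, a sketch of how the argument in \cite{HarH19} goes. Your identification of the weak Harnack inequality for quasisuperminimizers (under \azero{}, \aonen{}, \ainc{}, \adec{}) as the only substantive ingredient is correct, and your treatment of lower semicontinuity and of the inequality $u^*\le u$ a.e.\ is fine. The step $u\le u^*$ a.e.\ via an lsc representative $\hat u$ is also the right idea, though note that you are essentially invoking the content of the very theorem being cited.

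The Lebesgue-point argument, however, has a real gap. The ``sup-estimate half'' for the quasisubminimizer $M-u$ reads $\esssup_{B_{r/2}}(M-u)\lesssim\big(\fint_{B_r}(M-u)^q\big)^{1/q}$; this bounds the supremum by an average, which is the wrong direction for controlling $\limsup_{r\to 0}\fint_{B_r}u$ from above. And your closing sentence, that ``the one-sided estimates survive'' for $|u-u^*(x_0)|$, is unjustified: $|u-u^*(x_0)|$ is neither a quasisuperminimizer nor a quasisubminimizer in general, so none of the Harnack-type bounds apply to it directly. The clean route (and the one actually used in \cite{HarH19}) is to apply the weak Harnack inequality to the nonnegative quasisuperminimizer $v_r:=u-\essinf_{B(x_0,2r)}u$, obtaining $\big(\fint_{B_r}v_r^{s}\big)^{1/s}\le C\big(\essinf_{B_r}u-\essinf_{B_{2r}}u\big)\to 0$ for some small $s>0$; then local boundedness upgrades this to $\fint_{B_r}v_r\,dy\to 0$ via $v_r\le(2M)^{1-s}v_r^{\,s}$, and since $|u-u^*(x_0)|\le v_r+|\essinf_{B_r}u-u^*(x_0)|$ a.e.\ on $B_r$, the Lebesgue-point property follows.
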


In the lemma above, the  function $u^*$ is called the \textit{lsc-regularization} of $u$.
We say that $u$ is \textit{lsc-regularized}, if $u = u^*$.
In this section we prove that if $u$ is a quasicontinuous quasisuperminimizer, then $u = u^*$ quasieverywhere.
To accomplish this, we need the following lemma and its corollary.

\begin{lem}\label{lem:quasicont_represent}
Let $\phi \in \Phi_c(\Rn)$ satisfy (A0), (A1), (aInc) and (aDec).
Let $B$ be a ball and suppose that $u \in W^{1, \phix}(\Rn)$ is such that $\spt u \Subset B$.
Then there exists a set $E \subset B$ of zero capacity, such that
\[
\hat{u}(x) := \lim_{r \to 0} \fint_{B(x,r)} u(y) \, dy
\]
exists for every $x \in B \setminus E$. The function $\hat{u}$ is the quasicontinuous representative of $u$.
\end{lem}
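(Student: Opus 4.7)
The plan is the classical approach to quasicontinuous representatives: combine smooth approximation in $W^{1,\phix}(\Rn)$ with a Borel--Cantelli argument on $\phix$-capacity to produce a pointwise limit defined off a set of capacity zero, and then identify that limit with the Lebesgue-average $\hat u$. First, fixing a ball $B' \Supset \spt u$ and applying Lemma~\ref{lem:density} on $B'$, then multiplying by a cutoff $\eta \in C_0^\infty(B')$ which equals $1$ on $\spt u$ and extending by zero, I obtain $u_k \in C_0^\infty(B') \subset W^{1,\phix}(\Rn)$ with $\|u - u_k\|_{1,\phix} \to 0$. Passing to a subsequence I may assume $\|u_k - u_{k+1}\|_{1,\phix} \le 4^{-kq}$, where $q$ is the exponent from (aDec).

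Next I estimate the $\phix$-capacity of the open sets $E_k := \{|u_k - u_{k+1}| > 2^{-k}\}$. Since $2^k|u_k - u_{k+1}| \in W^{1,\phix}(\Rn)$ exceeds $1$ on $E_k$, it is admissible in the definition of $C_\phix(E_k)$; combining this with the (aDec) scaling $\phi(x, 2^k t) \le L \cdot 2^{kq} \phi(x,t)$ and the bound $\rho_\phix(f) \le \|f\|_\phix$ for $\|f\|_\phix \le 1$, I obtain
\[
C_\phix(E_k) \le L \cdot 2^{kq}\big(\rho_\phix(u_k-u_{k+1})+\rho_\phix(\nabla(u_k-u_{k+1}))\big) \le C \cdot 2^{-kq}.
\]
With $F_m := \bigcup_{k \ge m} E_k$ and $E_0 := \bigcap_m F_m$, subadditivity (C7) gives $C_\phix(F_m) \to 0$ and $C_\phix(E_0) = 0$. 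On $\Rn \setminus F_m$ the series $\sum_{k \ge m} |u_k - u_{k+1}|$ converges uniformly, so $u_k \to \tilde u$ uniformly there; hence $\tilde u$ is continuous on $\Rn \setminus F_m$ for every $m$ and therefore $\phix$-quasicontinuous on $\Rn$.

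It remains to prove $\tilde u = \hat u$ quasieverywhere. Because H\"older's inequality together with $\phi(x,1) \approx 1$ (from (A0) and (aDec)) yields $L^\phix(\Rn) \hookrightarrow L^1_{\loc}(\Rn)$, we have $u_k \to u$ in $L^1_{\loc}$, so $\fint_{B(x,r)} u_k \,dy \to \fint_{B(x,r)} u \,dy$ for each fixed $x$ and $r$. For $x \notin F_m$ and $k \ge m$, telescoping yields $|u_k(x) - \tilde u(x)| \le 2^{-m+1}$, and I can split
\[
\bigg|\fint_{B(x,r)} u\,dy - \tilde u(x)\bigg| \le \bigg|\fint_{B(x,r)}(u-u_k)\,dy\bigg| + \bigg|\fint_{B(x,r)} u_k \,dy - u_k(x)\bigg| + 2^{-m+1}.
\]
The middle term vanishes as $r \to 0$ by continuity of $u_k$, and the first is bounded by the Hardy--Littlewood maximal function $M(u-u_k)(x)$.

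The main obstacle is therefore a capacitary bound on $M(u - u_k)$. I would establish a weak-type estimate of the form
\[
C_\phix(\{Mv > \lambda\}) \lesssim \lambda^{-q}\big(\rho_\phix(v)+\rho_\phix(\nabla v)\big),
\]
which under (A0), (A1), (aInc), (aDec) follows from the boundedness of $M$ on $L^\phix(\Rn)$, a Poincar\'e-type inequality in small balls, and the capacity definition. Applied with $v_k = u - u_k$ and $\lambda_k = 2^{-k}$ together with the fast decay of $\|v_k\|_{1,\phix}$, a final Borel--Cantelli step provides a further set $E_1$ of capacity zero outside of which $M(u - u_k)(x) \to 0$. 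On $\Rn \setminus (E_0 \cup E_1)$ the right-hand side of the display has $\limsup$ at most $2^{-m+1}$; letting $m \to \infty$ gives $\hat u(x) = \tilde u(x)$, so $\hat u$ is the quasicontinuous representative of $u$. Verifying the capacitary maximal-function estimate under the present hypotheses is the main technical hurdle.
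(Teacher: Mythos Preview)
Your strategy is the same classical one the paper follows: smooth approximation, a Borel--Cantelli argument on $\phix$-capacity, and identification of the limit with the integral averages. Two differences are worth noting. First, the paper runs a \emph{single} Borel--Cantelli on the sets $U_i := \{M(u-u_i) > 2^{-i}\}\cap B$; since for continuous $u_i$ one has
\[
|u_i(x)-u_j(x)| \le M(u-u_i)(x) + M(u-u_j)(x),
\]
control of the maximal function already forces $\{u_i\}$ to be uniformly Cauchy off $V_k=\bigcup_{j\ge k}U_j$. Your separate first pass on $\{|u_k-u_{k+1}|>2^{-k}\}$ to produce $E_0$ is therefore redundant once the maximal estimate is available. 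Second, and more substantively: you correctly flag the capacitary weak-type estimate for $M$ as the crux, but propose to obtain it via ``a Poincar\'e-type inequality in small balls.'' The paper's route is different and more direct: it uses Kinnunen's pointwise bound $|\nabla Mv|\le M|\nabla v|$ a.e.\ (applicable since \ainc{} gives $u\in W^{1,p}(\Rn)$ for some $p>1$), together with boundedness of $M$ on $L^\phix$ from (A0), (A1), \ainc{}, to conclude $Mv\in W^{1,\phix}(3B)$ with $\|Mv\|_{1,\phix}\lesssim\|v\|_{1,\phix}$. Then $\eta\, Mv/\lambda$ with a cutoff $\eta\in C_0^\infty(3B)$ is admissible for $C_\phix(\{Mv>\lambda\}\cap B)$, and the estimate follows from the norm--modular comparison. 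A Poincar\'e inequality alone does not obviously deliver the Sobolev regularity of $Mv$ that the capacity definition requires, so that ingredient in your sketch should be replaced by the Kinnunen argument.
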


\begin{proof}
To prove this claim, we follow the proofs of Propositions 4.4 and 4.5 and Theorem 4.6 of \cite{HarH04}, where a similar claim was proven for the variable exponent case.

First we show that $Mu \in W^{1, \phix}(3B)$ and
\begin{equation}\label{equ:maximal}
\| Mu \|_{W^{1, \phix}(3B)} \leq c\| u \|_{W^{1, \phix}(3B)}.
\end{equation}
Here $Mu$ denotes the Hardy--Littlewood maximal function.
Since $u, |\nabla u| \in L^{\phix}(3B)$, it follows by \cite[Lemma~4.2.3]{HarH19b} and \cite[Theorem~4.3.6]{HarH19b} that $Mu, M|\nabla u| \in L^{\phix}(3B)$, and further
\[
\| Mu \|_{L^{\phix}(3B)}
\leq c\| u \|_{L^{\phix}(3B)}
\quad\text{and}\quad
\| M|\nabla u| \|_{L^{\phix}(3B)}
\leq c\| \nabla u \|_{L^{\phix}(3B)}.
\]
Note that \cite[Lemma~4.2.3]{HarH19b} is needed here, because \cite[Theorem~4.3.6]{HarH19b} requires the assumption (A2).
By \cite[Lemma~6.1.6]{HarH19b}, we have $u \in W^{1, p}(B)$, where $p>1$ is such that $\phi$ satisfies \ainc{p}.
Since $\spt u \Subset B$, it follows that $u$ in $W^{1, p}(\Rn)$.
From \cite{Kin97} it follows that $|\nabla Mu| \leq M|\nabla u|$ almost everywhere in $\Rn$. Hence
\[
\| \nabla Mu \|_{L^{\phix}(3B)}
\leq \| M|\nabla u| \|_{L^{\phix}(3B)}
\leq c\| \nabla u \|_{L^{\phix}(3B)},
\]
and \eqref{equ:maximal} now follows.

Then we show that for $\lambda > 0$ we have
\begin{equation}\label{equ:cap}
C_{\phix}(\{Mu > \lambda\} \cap B) \leq
c \max \{\| u / \lambda \|_{W^{1, \phix}(3B)}, \| u / \lambda \|_{W^{1, \phix}(3B)}^q\},
\end{equation}
where $q$ is such that $\phi$ satisfies \adec{q}.
Because $Mu$ is lower semi-continuous, the set $\{Mu > \lambda\}$ and its intersection with $B$ are open.
Let $\eta \in C_0^\infty(3B)$ be such that $\eta = 1$ in $2B$ and $0 \leq \eta \leq 1$ in $3B$.
Then we may use $\eta Mu / \lambda \in W^{1, \phix}(3B)$ as a test function for capacity of $\{Mu > \lambda\} \cap B$. Since $\eta Mu / \lambda = 0$ outside $3B$, we get
\begin{align*}
C_{\phix}(\{Mu > \lambda\} \cap B) &
\leq \int_{\Rn} \phi\left( x, \left| \eta M\frac{u}{\lambda}\right|\right) + \phi \left(x, \left| \nabla \left( \eta M\frac{u}{\lambda} \right)\right|\right) \, dx \\
& = \int_{3B} \phi\left( x, \left| \eta M\frac{u}{\lambda}\right|\right) + \phi \left(x, \left| \nabla \left( \eta M\frac{u}{\lambda} \right)\right|\right) \, dx \\
& \leq c \max \left\{ \left\| \eta M\frac{u}{\lambda} \right\|_{W^{1, \phix}(3B)}, \left\| \eta M\frac{u}{\lambda} \right\|_{W^{1, \phix}(3B)}^q\right\},
\end{align*}
where the last inequality follows by \cite[Lemma~3.2.11]{HarH19b}.
Now
\begin{align*}
\left\| \eta M\frac{u}{\lambda} \right\|_{W^{1, \phix}(3B)} &
= \left\| \eta M\frac{u}{\lambda} \right\|_{L^{\phix}(3B)} + \left\| \eta \nabla M\frac{u}{\lambda} + (\nabla \eta)M\frac{u}{\lambda} \right\|_{L^{\phix}(3B)} \\
& \leq \left\| \eta M\frac{u}{\lambda} \right\|_{L^{\phix}(3B)} + \left\| \eta \nabla M\frac{u}{\lambda} \right\|_{L^{\phix}(3B)} + \left\| (\nabla \eta) M\frac{u}{\lambda} \right\|_{L^{\phix}(3B)} \\
& \leq (1 + \| \nabla \eta \|_\infty) \left\| M\frac{u}{\lambda} \right\|_{W^{1, \phix}(3B)}.
\end{align*}
The first inequality follows from triangle inequality (\cite[Lemma~3.2.2]{HarH19b}), and the second from the fact that $\eta \leq 1$.
Since $\| \nabla \eta \|_\infty$ does not depend on $u$, it can be treated as constant depending only on $|B|$.
Inequality \eqref{equ:cap} then follows from \eqref{equ:maximal}.

Next we construct the set $E$.
By \cite[Lemma~3.4]{HarHT17}, $u\in W_0^{1,\phix}(B)\subset W_0^{1,\phix}(\Rn)$.
Let $\{u_i\}$ be a sequence of continuous functions converging to $u$ in $W^{1,\phix}(\Rn)$ such that $\|u-u_i\|_\phix \leq 2^{-2i}$.
Define the sets
\[
U_i := \{ M(u-u_i) > 2^{-i} \} \cap B, \quad
V_i := \bigcup_{j=i}^\infty U_j \quad\text{and}\quad
E := \bigcap_{j=1}^\infty V_j.
\]
By \eqref{equ:cap} we have $C_\phix(U_i) \leq c2^{-i}$, and therefore $C_\phix(V_i) \leq c2^{1-i}$ by subadditivity.
Since $E$ is contained in every $V_i$, it follows that $C_\phix(E) = 0$.

To complete the proof, we show that $\hat{u}$ exist on $B \setminus E$ and is quasicontinuous.
Continuity of $u_i$ implies that
\begin{align*}
\limsup_{r \to 0} & \left| u_i(x)-\fint_{B(x,r)} u(y) \, dy \right| \\
& \leq \limsup_{r \to 0} \left( \fint_{B(x,r)} |u_i(x)-u_i(y)| \, dy + \fint_{B(x,r)} |u_i(y)-u(y)| \, dy \right) \\
& \leq \limsup_{r \to 0} \fint_{B(x,r)} |u_i(y)-u(y)| \, dy \leq M(u_i-u)(x).
\end{align*}
If $x \in B \setminus V_k$, then for any $i,j \geq k$ we have
\begin{align*}
|u_i(x) - u_j(x)| &
\leq  \limsup_{r \to 0} \left( \left| u_i(x)-\fint_{B(x,r)} u(y) \, dy \right| + \left| u_j(x)-\fint_{B(x,r)} u(y) \, dy \right| \right) \\
& \leq M(u_i-u)(x) + M(u_j-u)(x) \leq 2^{-i} + 2^{-j}.
\end{align*}
It follows that the pointwise limit function $v(x) := \lim_{i\to\infty} u_i(x)$ exists for $x \in B \setminus V_k$ for every $k$, hence $v$ exists on $B \setminus E$. Since the convergence is uniform on $B \setminus V_k$, it follows that $v|_{B \setminus V_k}$ is continuous, which shows that $v$ is quasicontinuous. Then we show that $v = \hat{u}$ on $B \setminus E$. Fix a point $x$ in $B \setminus E$. Then
\[
\limsup_{r \to 0} \left| v(x) - \fint_{B(x,r)} u(y) \, dy \right| \leq |v(x)-u_i(x)| + \limsup_{r \to 0} \left|u_i(x)-\fint_{B(x,r)} u(y) \, dy \right|.
\]
Since the right-hand side approaches $0$ as $i \to \infty$, and the left-hand side does not depend on $i$, it follows that the left-hand side equals $0$, and thus $v(x) = \hat{u}(x)$. To finish the proof, we note that almost every point is Lebesgue point of $u$, and it follows that $u = \hat{u}$ almost everywhere.
\end{proof} 

In the following corollary, we show that assumption $u \in W^{1,\phix}(\Rn)$  can be replaced by $u \in W^{1,\phix}(\Omega)$. 

\begin{cor}\label{cor:quasicont_represent}
Let $\phi \in \Phi_c(\Rn)$ satisfy (A0), (A1), (aInc) and (aDec).
Let $u \in W^{1, \phix}(\Omega)$.
Then there exists  a set $E \subset \Omega$ of zero capacity, such that $\hat{u}(x)$ exists for every $x \in \Omega \setminus E$.
Moreover $\hat{u}$ is quasicontinuous in $\Omega$.
\end{cor}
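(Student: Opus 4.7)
The plan is to localize the preceding lemma: cover $\Omega$ by a countable family of balls whose doubles are compactly contained in $\Omega$, cut $u$ off to get a compactly supported Sobolev function on each such ball, apply Lemma~\ref{lem:quasicont_represent}, and glue the results together using the countable subadditivity (C7) of capacity.

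First I would choose balls $\{B_i\}_{i=1}^\infty$ with $2B_i \Subset \Omega$ and $\bigcup_i B_i = \Omega$, together with cutoff functions $\eta_i \in C_0^\infty(2B_i)$ satisfying $\eta_i \equiv 1$ on $B_i$ and $0 \le \eta_i \le 1$. By the Leibniz rule and the fact that (aDec) ensures $\phi(x,|u|+|\nabla u|) \in L^1(2B_i)$, the product $\eta_i u$ (extended by zero outside $2B_i$) lies in $W^{1,\phix}(\Rn)$ with $\spt(\eta_i u) \Subset 2B_i$. Lemma~\ref{lem:quasicont_represent} then yields a set $E_i \subset 2B_i$ with $C_\phix(E_i)=0$ and a quasicontinuous representative $\widehat{\eta_i u}$ defined on $2B_i \setminus E_i$ via the limit of averages.

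Set $E := \bigcup_{i} E_i$, so that $C_\phix(E)=0$ by (C7). For $x \in \Omega \setminus E$, pick any $i$ with $x \in B_i$; since $\eta_i u = u$ in a neighbourhood of $x$, the average $\fint_{B(x,r)} u\,dy$ equals $\fint_{B(x,r)} \eta_i u\,dy$ for all sufficiently small $r>0$, so $\hat u(x)$ exists and equals $\widehat{\eta_i u}(x)$. The value is independent of the choice of $i$ by the same argument, so $\hat u$ is well-defined on $\Omega \setminus E$.

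For quasicontinuity, fix $\epsilon>0$. Quasicontinuity of each $\widehat{\eta_i u}$ yields an open set $U_i \subset \Rn$ with $C_\phix(U_i) < \epsilon\, 2^{-i}$ on the complement of which $\widehat{\eta_i u}$ is continuous. Put $U := E \cup \bigcup_{i} U_i$, which is open (after replacing $E$ by an open set of capacity less than any prescribed value using (C3), then re-absorbing into $U$) and satisfies $C_\phix(U) \le \epsilon + \sum_i \epsilon\, 2^{-i} \le 2\epsilon$ by (C7). For any $x \in \Omega \setminus U$ there is an index $i$ with $x \in B_i$, and since $B_i$ is open, continuity of $\widehat{\eta_i u}$ on $(2B_i)\setminus U_i$ together with $\hat u = \widehat{\eta_i u}$ on $B_i \setminus E$ gives continuity of $\hat u|_{\Omega \setminus U}$ at $x$. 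The main (mild) obstacle is simply this bookkeeping: verifying that the locally defined representatives patch consistently, and converting the set of capacity zero from the lemma into a small open set so that the quasicontinuity exceptional set can be taken open, which is handled by the outer-regularity property (C3).
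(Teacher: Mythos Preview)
Your proposal is correct and follows essentially the same localization strategy as the paper's proof: cut off $u$ to get compactly supported functions, apply Lemma~\ref{lem:quasicont_represent}, and use countable subadditivity of capacity to assemble the exceptional sets. The only cosmetic difference is that the paper exhausts $\Omega$ by a nested sequence $U_i \Subset U_{i+1} \Subset \Omega$ (with a single cutoff $\eta$ supported in $\Omega$ for each stage) rather than covering by balls $B_i$ with $2B_i \Subset \Omega$, but the argument is otherwise identical, including the $2^{-i}\epsilon$ bookkeeping for quasicontinuity.
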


\begin{proof}
Let $B$ be a ball such that $\Omega \subset B$.
Let $U$ and $V$ be open sets such that $U \Subset V \Subset \Omega$.
Let $\eta \in C_0^\infty(\Omega)$ be such that $\eta = 1$ on $V$ and $0 \leq \eta \leq 1$ on $\Omega$.
Then $u\eta \in W^{1, \phix}(\Omega)$.
Since $\spt u\eta \subset \Omega$, by \cite[Lemma~3.4]{HarHT17}, $u\eta \in W_0^{1, \phix}(\Omega) \subset W_0^{1, \phix}(\Rn)$.
Lemma \ref{lem:quasicont_represent} shows that there is a set $E \subset \Rn$ of zero capacity such that the limit
\[
\lim_{r \to 0} \fint_{B(x,r)} u(y)\eta(y) \, dy
\]
exist everywhere on $B \setminus E$. Since $U \Subset V$, $V$ is open, and $\eta = 1$ on $V$, we have
\[
\hat{u}(x) =
\lim_{r \to 0} \fint_{B(x,r)} u(y) \, dy =
\lim_{r \to 0} \fint_{B(x,r)} u(y)\eta(y) \, dy
\]
for every $x \in U \setminus E$.

Let then $(U_i)$ be a sequence of open sets such that $U_i \Subset U_{i+1} \Subset \Omega$ and $\bigcup_{i=1}^{\infty} U_i = \Omega$.
Then for every $i$ there exist a set $E_i$ of zero capacity, such that $\hat{u}$ exist in $U_i \setminus E_i$.
It follows that $\hat{u}$ exists in $\Omega \setminus \bigcup_{i=1}^{\infty} E_i$.
By subadditivity, $C_{\phix}(\bigcup_{i=1}^{\infty} E_i) = 0$.

It remains to show quasicontinuity.
By Lemma \ref{lem:quasicont_represent} $\hat{u}$ is quasicontinuous on every $U_i$.
Hence we may choose open sets $F_i$ such that $C_{\phix}(F_i) < 2^{-i}\varepsilon$ and $\hat{u}|_{U_i \setminus F_i}$ is continuous.
Hence $\bigcup_{i=1}^{\infty} F_i$ is open, $\hat{u}|_{\Omega \setminus \bigcup_{i=1}^{\infty} F_i}$ is continuous, and by subadditivity $C_{\phix}(\bigcup_{i=1}^{\infty} F_i) < \varepsilon$.
\end{proof}

Now we can prove that $u = u^*$ in Lemma \ref{lem:lsc}.

\begin{lem}\label{lem:equals_qe}
Let $\phi \in \Phi_c(\Rn)$ satisfy (A0), (A1), (A1-$n$), (aInc) and (aDec).
Let $u$ and $u^*$ be as in Lemma \ref{lem:lsc}.
If $u$ is quasicontinuous, then $u = u^*$ quasieverywhere.
\end{lem}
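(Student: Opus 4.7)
The plan is to reduce to the locally bounded case via truncation, then combine the locally bounded statement of Lemma \ref{lem:lsc} (every point is a Lebesgue point of $u_k^*$) with Corollary \ref{cor:quasicont_represent} (the quasicontinuous representative is given by the same limit of averages quasieverywhere). Standard uniqueness of quasicontinuous representatives yields $u_k = u_k^*$ quasieverywhere, and letting $k \to \infty$ recovers the claim for $u$ and $u^*$.

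Concretely, for each $k \in \N$ set $u_k := \min\{u,k\}$. Since $u$ is bounded from below and $u_k \leq k$, the function $u_k$ is locally bounded; it is quasicontinuous, because $\min$ with a constant preserves continuity on every set where $u$ is continuous; and it is a local quasisuperminimizer, because truncation from above preserves this property (a standard fact, routinely verified by testing with $\min\{u+v,k\} - u_k$ as a competitor). Applying Lemma \ref{lem:lsc} to $u_k$ gives
\[
u_k^*(x) = \lim_{r\to 0} \fint_{B(x,r)} u_k(y)\,dy \quad\text{for every } x\in\Omega.
\]
On the other hand, Corollary \ref{cor:quasicont_represent} produces a quasicontinuous representative $\hat u_k$ of $u_k$ satisfying the same limit-of-averages identity at every point of $\Omega\setminus E_k$ with $C_\phix(E_k)=0$; thus $\hat u_k = u_k^*$ quasieverywhere. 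Since $u_k$ is itself quasicontinuous and agrees a.e.\ with $\hat u_k$, the standard fact that two quasicontinuous functions coinciding a.e.\ coincide quasieverywhere (apply quasicontinuity to $u_k-\hat u_k$ and estimate the super-level sets $\{|u_k-\hat u_k|>1/j\}$) gives $u_k = \hat u_k$ quasieverywhere, and hence $u_k = u_k^*$ quasieverywhere.

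It remains to pass to the limit. Since $\essliminf$ commutes with min against a constant, $u_k^* = \min\{u^*,k\}$, so $\min\{u(x),k\} = \min\{u^*(x),k\}$ quasieverywhere for every $k\in\N$. Taking the capacity-zero union of the exceptional sets (one per $k$) together with $\{x\in\Omega : u(x)=+\infty\}$, which has capacity zero because $u$ is a quasicontinuous $W^{1,\phix}$-function, yields a set $N$ with $C_\phix(N)=0$ such that for every $x\in\Omega\setminus N$ we have $u(x)<\infty$ and $\min\{u(x),k\} = \min\{u^*(x),k\}$ for every $k$; choosing $k>u(x)$ then forces $u^*(x) = u(x)$. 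The main obstacle I anticipate is the first step: verifying that $u_k = \min\{u,k\}$ is still a local quasisuperminimizer under the assumptions \azero{}, \aone{}, \ainc{}, \adec{} — routine in the classical $p$-harmonic case but requiring an explicit check here.
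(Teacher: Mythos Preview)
Your proposal is correct and follows essentially the same approach as the paper: truncate to $u_k=\min\{u,k\}$, combine Lemma~\ref{lem:lsc} (every point is a Lebesgue point of $u_k^*$) with Corollary~\ref{cor:quasicont_represent} and the uniqueness of quasicontinuous representatives (the paper cites \cite{Kil98}) to obtain $u_k=u_k^*$ quasieverywhere, then let $k\to\infty$. Two remarks: the ``obstacle'' you flag---that $u_k$ is a quasisuperminimizer---is handled in the paper by citing \cite[Lemma~4.6]{HarH19}; and your limit passage via the identity $u_k^*=\min\{u^*,k\}$ is slightly cleaner than the paper's explicit case analysis with the sets $A_k^1,A_k^2$, but amounts to the same thing.
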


\begin{proof}
Suppose that $u$ is quasicontinuous. For any positive integer $k$, we let $u_k = \min \{u, k\}$. It is easy to see that $u_k$ is quasicontinuous.
By Corollary \ref{cor:quasicont_represent} there exists a set $E_k$ of zero capcacity such that
\[
\hat{u}_k(x) :=
\lim_{r \to 0} \fint_{B(x,r)} u_k(y) \, dy
\]
exist for all $x \in \Omega \setminus E_k$, and $\hat{u}_k$ is quasicontinuous.
Since both $u_k$ and $\hat{u}_k$ are quasicontinous and $u_k = \hat{u}_k$ almost everywhere in $\Omega$, it follows by \cite{Kil98} that $u_k = \hat{u}_k$ quasieverywhere in $\Omega$.
Let $F_k = E_k \cup \{u_k \neq \hat{u}_k\}$.
Then $C_{\phix}(F_k)=0$ and we have
\[
u_k(x) =
\lim_{r \to 0} \fint_{B(x,r)} u_k(y) \, dy
\]
for every $x \in \Omega \setminus F_k$.

By \cite[Lemma~4.6]{HarH19}, $u_k$ is a quasisuperminimizer.
By our assumption, $u$ is bounded from below, hence $u_k$ is bounded. 
Let $u_k^*$ be defined by
\[
u_k^*(x) :=
\essliminf_{y \to x} u_k(y). 
\]
By Lemma \ref{lem:lsc}, every point of $\Omega$ is a Lebesgue point $u_k^*$, and $u_k = u_k^*$ almost everywhere.
Hence, for every $x \in \Omega \setminus F_k$, we have
\[
u_k(x) =
\lim_{r \to 0} \fint_{B(x,r)} u_k(y) \, dy =
\lim_{r \to 0} \fint_{B(x,r)} u_k^*(y) \, dy =
u_k^*(x).
\]
Therefore $\{u_k \neq u_k^*\} \subset F_k$, and it follows that $C_{\phix}(\{u_k \neq u_k^*\}) = 0$.

Let
\[
A_k^1 :=
\{x \in \Omega : u^*(x) < u(x) \leq k\}
\text{ and }
A_k^2 :=
\{x \in \Omega :  u(x) < u^*(x) \text{ and } u(x) \leq k\}.
\]
We will show that $A_k^1 \subset \{u_k \neq u_k^*\}$ and  $A_k^2 \subset \{u_{2k} \neq u_{2k}^*\}$, which shows that both $A_k^1$ and $A_k^2$ are of capacity zero.
Let $x_0 \in \Omega$ be such that $u(x_0) \leq k$.
Suppose first that $u^*(x_0) < u(x_0)$.
Let $r > 0$ be so small that $B(x_0,r) \subset \Omega$.
Then $\essinf_{y \in B(x_0,r)} u(y) \leq u^*(x_0) < k$,
from which it follows that
\[
\essinf_{y \in B(x_0,r)} \min \{u(y), k\} =
\min \{\essinf_{y \in B(x_0,r)} u(y), k\} \leq u^*(x_0).
\]
Hence 
\[
u_k^*(x_0) =
\essliminf_{y \to x_0} u_k(y) \leq
u^*(x_0) < u(x_0) =
u_k(x_0),
\]
and $x_0 \in \{u_k \neq u_k^*\}$.
Suppose then that $u^*(x_0) > u(x_0)$.
Then there exists $r_0 > 0$ such that $B(x_0,r_0) \subset \Omega$ and $\essinf_{y \in B(x_0,r_0)} u(y) > u(x_0)$.
Hence
\[
u_{2k}^*(x_0) \geq
\essinf_{y \in B(x_0,r_0)} \min \{u(y), 2k\} =
\min \{\essinf_{y \in B(x_0,r_0)} u(y), 2k\} > 
u(x_0) =
u_{2k}(x_0),
\]
and $x_0 \in \{u_{2k} \neq u_{2k}^*\}$.

Since $A := \{x \in \Omega : u(x) \neq u^*(x) \text{ and } u(x) < \infty\} = \bigcup_{k = 1}^\infty (A_k^1 \cup A_k^2)$, we get by subadditivity that $C_{\phix}(A) = 0$.
Since $u$ is quasicontinuous $C_{\phix}(\{u = \infty\}) = 0$, and therefore $A' := \{x \in \Omega : u(x) \neq u^*(x) \text{ and } u(x) = \infty\}$ is of capacity zero.
And finally, since $\{u \neq u^*\} = A \cup A'$, we get $C_{\phix}(\{u \neq u^*\}) = 0$.
\end{proof}

\section{The Kellogg property}

In this section we prove our main result.
But first, we have to we prove some auxiliary results.
The next lemma gives a characterization of $W^{1, \phix}(\Omega)$ using quasicontinuous functions (cf. \cite[Proposition~2.5]{AdaBB14}).

\begin{lem}\label{lem:zero_bound_char}
Let $\phi \in \Phi_c(\Rn)$ satisfy (A0), (A1), (aInc) and (aDec).
Assume that $u$ is quasicontinuous in $\Omega$. Then $u \in W_0^{1, \phix}(\Omega)$ if and only if
\[
\tilde{u} :=
\left\{ \begin{array}{ll}
	u & \text{in } \Omega, \\
	0 & \text{otherwise},
\end{array} \right.
\]
is quasicontinous and belongs to $W^{1, \phix}(\Rn)$.
\end{lem}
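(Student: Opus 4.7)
The plan is to prove the two implications separately.

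\textbf{Only if direction.} Assume $u \in W_0^{1,\phix}(\Omega)$. Choose $u_i \in C_0^\infty(\Omega)$ with $\|u - u_i\|_{W^{1,\phix}(\Omega)} \le 4^{-i}$ and extend by zero to $\tilde u_i \in C_0^\infty(\Rn)$; since each $u_i$ has compact support in $\Omega$, the weak gradient of $\tilde u_i - \tilde u_j$ is the zero extension of $\nabla(u_i - u_j)$, so $(\tilde u_i)$ is Cauchy and converges in $W^{1,\phix}(\Rn)$ to $\tilde u$, which is therefore in $W^{1,\phix}(\Rn)$. Fix a ball $B$ with $\overline\Omega \Subset B$; then $\spt \tilde u \Subset B$, so the maximal-function argument from the proof of Lemma~\ref{lem:quasicont_represent} applies to these specific approximants and produces a set $E \subset B$ with $C_\phix(E) = 0$ and a quasicontinuous function $\hat{\tilde u}$ on $\Rn$ such that $\hat{\tilde u}(x) = \lim_{i \to \infty} \tilde u_i(x)$ for every $x \notin E$ and $\hat{\tilde u} = \tilde u$ almost everywhere. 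The key new observation is that, because $\spt \tilde u_i \Subset \Omega$, one has $\tilde u_i(x) = 0$ for every $i$ and every $x \in \Rn \setminus \Omega$, so $\hat{\tilde u} = 0 = \tilde u$ quasi-everywhere on $\Rn \setminus \Omega$. Inside $\Omega$ the restriction $\hat{\tilde u}|_\Omega$ is quasicontinuous and agrees with $u$ almost everywhere; since $u$ is quasicontinuous by hypothesis, the uniqueness of quasicontinuous representatives from \cite{Kil98} (already invoked in Lemma~\ref{lem:equals_qe}) gives $\tilde u = \hat{\tilde u}$ quasi-everywhere in $\Omega$ as well. Combining, $\tilde u = \hat{\tilde u}$ quasi-everywhere on $\Rn$; enlarging the open set on which $\hat{\tilde u}$ fails to be continuous by an open cover of the capacity-zero set $\{\tilde u \ne \hat{\tilde u}\}$ of arbitrarily small capacity (available via property (C3)) shows that $\tilde u$ itself is quasicontinuous.

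\textbf{If direction.} Assume $\tilde u \in W^{1,\phix}(\Rn)$ is quasicontinuous; note $\tilde u \equiv 0$ pointwise on $\Rn \setminus \Omega$. Splitting into positive and negative parts and truncating reduces to the case $u \ge 0$, $u$ bounded. For $\epsilon > 0$ set $u_\epsilon := (u-\epsilon)_+ \in W^{1,\phix}(\Omega)$; modular dominated convergence together with \adec{} yields $u_\epsilon \to u$ in $W^{1,\phix}(\Omega)$ as $\epsilon \to 0^+$, so it suffices to show $u_\epsilon \in W_0^{1,\phix}(\Omega)$ for each fixed $\epsilon > 0$. Given $\eta > 0$, choose an open $W_\eta$ with $C_\phix(W_\eta) < \eta$ such that $\tilde u|_{\Rn \setminus W_\eta}$ is continuous; since $\tilde u$ vanishes on the compact set $\partial\Omega \setminus W_\eta$, a finite-cover argument supplies an open neighbourhood $W'_\eta$ of $\partial\Omega \setminus W_\eta$ on which $\tilde u < \epsilon/2$ outside $W_\eta$. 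Hence $\{u \ge \epsilon\} \setminus (W_\eta \cup W'_\eta)$ is a compact subset of $\Omega$. Multiplying $u_\epsilon$ by a smooth cut-off vanishing near $\partial\Omega \setminus W_\eta$ and supported in $\Omega$ produces a function supported compactly in $\Omega$, which lies in $W_0^{1,\phix}(\Omega)$ by Lemma~\ref{lem:nolla_ulkopuolella}. Letting $\eta \to 0^+$, the contribution introduced on $W_\eta$ is controlled through the capacity estimate \eqref{equ:cap}, which bounds the $W^{1,\phix}(\Rn)$-norm of corrections supported on $W_\eta$ in terms of $C_\phix(W_\eta)$; closedness of $W_0^{1,\phix}(\Omega)$ in $W^{1,\phix}(\Omega)$ then delivers $u_\epsilon \in W_0^{1,\phix}(\Omega)$.

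\textbf{Main obstacle.} The genuine difficulty lies in the \emph{if} direction: quasicontinuity gives control of $\tilde u$ only away from sets of small capacity, yet the construction must produce approximants compactly supported in $\Omega$. Reconciling these requires turning the smallness of $C_\phix(W_\eta)$ into smallness of a $W^{1,\phix}$-norm, which is precisely the mechanism provided by the capacity estimate \eqref{equ:cap} together with the pointwise vanishing of $\tilde u$ on $\Rn \setminus \Omega$.
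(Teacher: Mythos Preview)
Your \emph{only if} direction is essentially the paper's argument: approximate by $C_0^\infty(\Omega)$ functions, pass to a pointwise-q.e.\ convergent subsequence to obtain a quasicontinuous limit $v$ that vanishes q.e.\ outside $\Omega$, and use \cite{Kil98} inside $\Omega$ to conclude $\tilde u = v$ q.e. The only cosmetic difference is that you route the pointwise convergence through the machinery of Lemma~\ref{lem:quasicont_represent}, whereas the paper cites \cite[Lemma~5.1]{BarHH18} directly; the content is the same.

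The \emph{if} direction is where you diverge, and not to your advantage. The paper's proof is one line: pick a ball $B$ with $\Omega \Subset B$; then $\tilde u \in W^{1,\phix}(B)$ and $\tilde u = 0$ on $B \setminus \Omega$, so Lemma~\ref{lem:nolla_ulkopuolella} immediately yields $\tilde u \in W_0^{1,\phix}(\Omega)$. Quasicontinuity of $\tilde u$ is not even used in this direction. You have overlooked this lemma and instead built a truncation-and-cut-off scheme that is both unnecessary and, as written, incorrect. Specifically, you invoke \eqref{equ:cap} to ``bound the $W^{1,\phix}(\Rn)$-norm of corrections supported on $W_\eta$ in terms of $C_\phix(W_\eta)$'', but \eqref{equ:cap} goes the opposite way: it bounds a capacity by a Sobolev norm. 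What you would actually need is the \emph{definition} of capacity (a test function $w_\eta$ with small modular, equal to $1$ on a neighbourhood of $W_\eta$), and even then you have not explained how the gradient of the cut-off interacts with $u_\epsilon$ on the transition region, nor why the resulting error is controlled in $W^{1,\phix}$ rather than merely in $L^\phix$. The scheme can likely be repaired with enough care, but there is no reason to pursue it when Lemma~\ref{lem:nolla_ulkopuolella} disposes of the matter at once.
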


\begin{proof}
Suppose first that $u \in W_0^{1, \phix}(\Omega)$.
By definition of $W_0^{1, \phix}(\Omega)$, there are functions $v_i \in C_0^{\infty}(\Omega)$ such that $v_i \to u$ in $W^{1, \phix}(\Omega)$.
Then $v_i \to u$ in $W^{1, \phix}(\Rn)$ also.
By \cite[Lemma~5.1]{BarHH18}, we may assume that $v_i$ converges pointwise quasieverywhere, and that the convergence is uniform outside a set of arbitrarily small capacity.
Denote the pointwise limit of $\{v_i\}$ by $v$. Then $v$ is quasicontinuous and $v = 0$ quasieverywhere in $\Rn \setminus \Omega$.
Since $u = v$ almost everywhere in $\Omega$, and both functions are quasicontinuous in $\Omega$, it follows from \cite{Kil98} that $u = v$ quasieverywhere in $\Omega$.
It then follows that $\tilde{u} = v$ quasieverywhere in $\Rn$, hence $\tilde{u}$ is quasicontinuous and belongs to $W^{1, \phix}(\Rn)$.

Suppose then that $\tilde{u}$ is quasicontinuous and belongs to $W^{1, \phix}(\Rn)$.
Let $B$ be an open ball such that $\Omega \Subset B$.
Then $\tilde{u} \in W^{1, \phix}(B)$.
Since $\tilde{u} = 0$ in $B \setminus \Omega$, it follows by Lemma \ref{lem:nolla_ulkopuolella} that $\tilde{u} \in W_0^{1, \phix}(\Omega)$, hence $u \in W_0^{1, \phix}(\Omega)$ also.
\end{proof}

Then we need the comparison principle given by the corollary following the next lemma.

\begin{lem}[Proposition~4.9 in \cite{Kar_pp}]\label{lem:comparison_principle}
Let $\phi\in \Phi_c (\Omega)$ be strictly convex and satisfy (A0), (A1) and (aDec). If $f, g \in W^{1, \phi}(\Omega)$ and $(f -g)_+ \in W^{1, \phi}_0(\Omega)$, then $H(f) \le H(g)$ in $\Omega$. 
\end{lem}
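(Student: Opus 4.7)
The plan is to run the classical max--min swap argument. Set $u := H(f)$ and $v := H(g)$; both exist and are unique by Theorem~\ref{thm:H(f)}. The crux is to verify
\[
(u-v)_+ \in W^{1,\phix}_0(\Omega),
\]
after which $\min(u,v) = u - (u-v)_+$ and $\max(u,v) = v + (u-v)_+$ are admissible competitors in the minimizer inequalities for $u$ and $v$, respectively, yielding
\begin{align*}
\int_\Omega \phi(x,|\nabla u|)\,dx &\le \int_\Omega \phi(x,|\nabla \min(u,v)|)\,dx,\\
\int_\Omega \phi(x,|\nabla v|)\,dx &\le \int_\Omega \phi(x,|\nabla \max(u,v)|)\,dx.
\end{align*}
Pointwise almost everywhere the unordered pair $\{|\nabla u|,|\nabla v|\}$ coincides with $\{|\nabla \min(u,v)|, |\nabla \max(u,v)|\}$ (split into $\{u < v\}$ and $\{u > v\}$, using $\nabla u = \nabla v$ a.e.\ on $\{u=v\}$), so summing forces equality in each of the two inequalities. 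Hence $\min(u,v)$ is itself a minimizer with boundary datum $f$, and uniqueness in Theorem~\ref{thm:H(f)} (which uses strict convexity together with \azero{}) yields $\min(u,v)=u$, i.e., $u \le v$.

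For the key step, set
\[
w := (u-f) - (v-g) + (f-g)_+,
\]
which lies in $W^{1,\phix}_0(\Omega)$ as a sum of three of its elements. Since $(f-g) \le (f-g)_+$, we have $u - v \le w$ pointwise, hence $(u-v)_+ \le w_+$, and $w_+ \in W^{1,\phix}_0(\Omega)$ by the lattice structure (truncate a sequence of $C_0^\infty$ approximants of $w$). Next approximate $w_+$ by non-negative $\psi_n \in C_0^\infty(\Omega)$ with $\psi_n \to w_+$ in $W^{1,\phix}(\Omega)$, and set $\eta_n := \min((u-v)_+, \psi_n)$. Wherever $\psi_n = 0$ we have $\eta_n = 0$, so $\spt \eta_n \subset \spt \psi_n \Subset \Omega$, and \cite[Lemma~3.4]{HarHT17} places each $\eta_n$ in $W^{1,\phix}_0(\Omega)$. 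A dominated-convergence argument, using the pointwise identification of gradients of $\min$ together with $(u-v)_+ \le w_+$ a.e., shows $\eta_n \to \min((u-v)_+, w_+) = (u-v)_+$ in $W^{1,\phix}(\Omega)$, whence $(u-v)_+ \in W^{1,\phix}_0(\Omega)$.

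The main obstacle is precisely this last passage: upgrading the pointwise domination $(u-v)_+ \le w_+$ together with $w_+ \in W^{1,\phix}_0(\Omega)$ into the membership $(u-v)_+ \in W^{1,\phix}_0(\Omega)$. The delicate point is organising the smooth approximation so that $\spt \eta_n \Subset \Omega$, which is exactly where the compact-support criterion of \cite[Lemma~3.4]{HarHT17} is needed, and this is also the single step through which the one-sided hypothesis $(f-g)_+ \in W^{1,\phix}_0(\Omega)$ enters, via the construction of the dominating function $w$. Once $(u-v)_+ \in W^{1,\phix}_0(\Omega)$ is in hand, the remainder is the classical min/max test-function swap combined with uniqueness of $H(f)$.
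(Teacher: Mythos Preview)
The paper does not supply its own proof of this lemma; it is quoted verbatim as Proposition~4.9 of \cite{Kar_pp}. So there is nothing in the present paper to compare your argument against. Your proof is the classical min/max swap combined with uniqueness from strict convexity, and it is essentially correct as written.

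Two small points are worth tightening. First, your appeal to Theorem~\ref{thm:H(f)} for existence of $H(f)$ and $H(g)$ is not quite licensed by the stated hypotheses: that theorem needs \ainc{} and $L^\infty$ boundary data, while the lemma assumes only \azero{}, \aone{}, \adec{}, and strict convexity. In the context of the paper this is harmless, since every application of the comparison principle there involves data for which existence is already secured; but in a self-contained write-up you should either add \ainc{} (as the rest of the paper does) or phrase the claim as ``whenever the minimizers exist''. Uniqueness, on the other hand, follows directly from strict convexity of $\phi$ and does not require the extra hypotheses.

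Second, the dominated-convergence step for the gradients of $\eta_n=\min((u-v)_+,\psi_n)$ is correct but deserves one more line: from $\psi_n\to w_+$ in $W^{1,\phix}(\Omega)$ you first pass to a subsequence with $\psi_n\to w_+$, $\nabla\psi_n\to\nabla w_+$ a.e.\ and with an $L^{\phix}$-majorant for $|\nabla\psi_n|$; on $\{(u-v)_+<w_+\}$ the indicator $\chi_{\{(u-v)_+\ge\psi_n\}}$ eventually vanishes, and on $\{(u-v)_+=w_+\}$ one uses that $\nabla(u-v)_+=\nabla w_+$ a.e.\ there. With this, $\nabla\eta_n\to\nabla(u-v)_+$ in $L^{\phix}(\Omega)$ along the subsequence, which suffices since $W^{1,\phix}_0(\Omega)$ is closed. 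Note also that the non-negativity of the approximants $\psi_n$ is not actually needed for the support inclusion $\spt\eta_n\subset\spt\psi_n$, since $(u-v)_+\ge 0$ forces $\eta_n=0$ wherever $\psi_n=0$.
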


\begin{cor}\label{cor:comparison_cont_bound}
Let $\phi\in \Phi_c (\Omega)$ be strictly convex and satisfy (A0), (A1), (aInc) and (aDec).
If $f, g \in C(\partial\Omega)$ and $f \leq g$ quasieverywhere on $\partial\Omega$, then $H_f \le H_g$ in $\Omega$.
\end{cor}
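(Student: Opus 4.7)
My plan is to reduce the corollary to the Sobolev-type comparison of Lemma \ref{lem:comparison_principle} via a Lipschitz approximation of $g$. Fix a Lipschitz function $h \leq f$ on $\partial \Omega$ and let $\varepsilon > 0$. By density of Lipschitz functions in $C(\partial \Omega)$, there is a Lipschitz $k$ on $\partial \Omega$ with $g - \varepsilon \leq k \leq g$, so that
\[
h - \varepsilon \leq f - \varepsilon \leq g - \varepsilon \leq k \quad \text{quasieverywhere on } \partial \Omega.
\]
If the Lipschitz-q.e.\ comparison $H(h) - \varepsilon = H(h-\varepsilon) \leq H(k)$ in $\Omega$ can be established, then $H(k) \leq H_g$ by the definition of $H_g$, so $H(h) \leq H_g + \varepsilon$; sending $\varepsilon \to 0$ and taking the supremum over Lipschitz $h \leq f$ gives $H_f \leq H_g$.

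The core step is thus: if $h_1, h_2$ are Lipschitz on $\partial \Omega$ with $h_1 \leq h_2$ quasieverywhere on $\partial \Omega$, then $H(h_1) \leq H(h_2)$ in $\Omega$. By McShane extension I realize $h_1, h_2$ as bounded Lipschitz functions $\bar h_1, \bar h_2$ on $\Rn$, which serve as $W^{1, \phix}(\Omega)$ boundary data. Lemma \ref{lem:comparison_principle} then reduces the claim to showing that $v := (\bar h_1 - \bar h_2)_+$ satisfies $v|_\Omega \in W^{1, \phix}_0(\Omega)$. This $v$ is Lipschitz and nonnegative on $\Rn$ and vanishes quasieverywhere on $\partial \Omega$; let $N := \{v > 0\} \cap \partial \Omega$ denote the $\phix$-capacity-zero exception set.

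The main obstacle is that $v$ need not vanish near $N$, so one cannot invoke Lemma \ref{lem:nolla_ulkopuolella} directly. I would handle this by a truncation-and-cutoff approximation. Let $v_m := (v - 1/m)_+$, which is Lipschitz and vanishes on the open set $\{v < 1/m\}$; and let $\zeta_j$ be admissible for the $\phix$-capacity of $N$, so that $0 \leq \zeta_j \leq 1$, $\zeta_j \equiv 1$ on an open set $U_j \supset N$, and $\|\zeta_j\|_{W^{1, \phix}(\Rn)} \to 0$ as $j \to \infty$. Then $w_{m,j} := (1 - \zeta_j)_+ v_m$ vanishes on $\{v < 1/m\} \cup U_j$, which is an open neighborhood of $\partial \Omega$: continuity of $v$ together with $v \equiv 0$ on $\partial \Omega \setminus N$ forces $\partial \Omega \setminus N \subset \{v < 1/m\}$, while $N \subset U_j$ by construction. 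Consequently $w_{m,j}|_\Omega$ has compact support in $\Omega$, so Lemma \ref{lem:nolla_ulkopuolella} yields $w_{m,j}|_\Omega \in W^{1, \phix}_0(\Omega)$. A product-rule estimate combined with the boundedness of $v_m$ and $\nabla v_m$ and the convergence $\zeta_j \to 0$ in $W^{1, \phix}(\Rn)$ shows $w_{m,j} \to v_m$ in $W^{1, \phix}(\Omega)$ as $j \to \infty$, whence $v_m|_\Omega \in W^{1, \phix}_0(\Omega)$; and the standard convergence $v_m \to v$ in $W^{1, \phix}(\Omega)$ as $m \to \infty$ then gives $v|_\Omega \in W^{1, \phix}_0(\Omega)$, completing the proof.
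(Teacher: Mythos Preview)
Your proof is correct and follows essentially the same approach as the paper: first reduce from continuous to Lipschitz boundary data via an $\varepsilon$-approximation of $g$ from below, then for Lipschitz data show $(\bar h_1-\bar h_2)_+\in W^{1,\phix}_0(\Omega)$ by multiplying a truncation $(v-1/m)_+$ with a capacity cutoff $1-\zeta_j$ and invoking Lemma~\ref{lem:nolla_ulkopuolella}. The paper combines your two limits into one by defining $v_\varepsilon:=\chi_\Omega(1-u_\varepsilon)((f-g)_+-\varepsilon)_+$ and sending $\varepsilon\to0$ directly, but the idea is the same.
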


\begin{proof}
Suppose first that $f, g \in \lip(\partial\Omega)$.
Extend them to Lipschitz functions defined on the whole $\Rn$.
If we show that $(f-g)_+ \in W_0^{1,\phix}(\Omega)$, then the claim follows from Lemma \ref{lem:comparison_principle}.
Let $\ve > 0$.
Since $C_\phix(\{f > g\} \cap \partial\Omega) = 0$, there is an open set $U \supset \{f > g\} \cap \partial\Omega$ and a function $u_\ve \in W^{1,\phix}(\Rn)$, such that $u_\ve = 1$ in $U$, $0 \leq u_\ve \leq 1$, and
\[
\int_{\Rn} \phi(x,|u_\ve|) + \phi(x,|\nabla u_\ve|) \, dx < \ve. 
\]
Since $f$ and $g$ are continuous, the set $V :=\{f < g+\ve\}$ is open.
It is true that $\partial\Omega \subset U \cup V$.
Let $v_\ve := \chi_\Omega(1-u_\ve)((f-g)_+-\ve)_+$.
Then $\spt v_\ve \subset \Omega \setminus (U \cup V) \Subset \Omega$, hence we may choose an open set $D$, such that $\spt v_\ve \Subset D \Subset \Omega$.
It follows from Lemma \ref{lem:nolla_ulkopuolella} that $v_\ve \in W_0^{1,\phix}(\Omega)$.
A straightforward calculation shows that $v_\ve \to (f-g)_+$ in $W^{1,\phix}(\Omega)$, hence $(f-g)_+ \in W_0^{1,\phix}(\Omega)$.

Suppose then $f, g \in C(\partial\Omega)$.
Fix $x \in \Omega$ and let $\ve > 0$.
Let $\eta \in \lip(\partial\Omega)$ be such that $\eta \leq f$ on $\partial\Omega$ and $H(\eta)(x) > H_f(x) - \ve$.
Let $\xi \in \lip(\partial\Omega)$ be such that $g \geq \xi \geq g-\ve$ on $\partial\Omega$.
If $y \in \partial\Omega$ is such that $f(y) \leq g(y)$, then $\eta(y) - \ve \leq \xi(y)$.
Hence $\eta-\ve \leq \xi$ quasieverywhere in $\partial\Omega$.
By the first part of the proof, it follows that $H(\eta-\ve) \leq H(\xi)$ on $\Omega$. Now
\[
H_f(x)-\ve < H(\eta)(x) = H(\eta-\ve)(x)+\ve \leq H(\xi)(x)+\ve \leq H_g(x)+\ve. 
\]
Since $\ve$ was arbitrary $H_f(x) \leq H_g(x)$, and the claim now follows, since $x$ was arbitrary.
\end{proof}

We need one more lemma in order to prove our main result.
This lemma corresponds to \cite[Lemma~5.5]{AdaBB14}, and the proof is also similar, but we include it here for completeness.

\begin{lem}\label{lem:B_supersol}
Suppose $\phi \in \Phi_c(\Rn)$ is strictly convex and satisfies (A0), (A1), (A1-$n$), (aInc) and (aDec). Let $x \in \partial\Omega$ and $B := B(x, r)$.
Let $f$ be Lipschitz on $\partial\Omega$ and suppose that $f = m$ on $B \cap \partial\Omega$, where $m := \sup_{\partial\Omega} f$. Let (see Figure \ref{fig:kuva})
\[
u := \left\{\begin{array}{ll}
	H(f) & \text{in } \Omega, \\
	m & \text{in } B \setminus \Omega.
\end{array}\right.
\]
Then $u$ is a quasicontinuous superminimizer in $B$.
\end{lem}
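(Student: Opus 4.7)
\emph{Plan.} I would prove the statement in three parts: (i) $u \in W^{1,\phix}(B)$; (ii) $u$ is quasicontinuous on $B$; (iii) $u$ satisfies the superminimizer inequality on $B$. The backbone is Corollary~\ref{cor:comparison_cont_bound}, applied to $f$ and the constant boundary data $m$: it yields $H(f) \leq m$ in $\Omega$, so $u \leq m$ on $B$, which is what makes the pasted definition natural at $\partial\Omega \cap B$.

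\emph{Sobolev regularity and quasicontinuity.} Fix a Lipschitz extension $\tilde f$ of $f$ to $\Rn$ (so $\tilde f = m$ on $\partial\Omega \cap B$ by hypothesis), and let $g$ be the zero extension to $\Rn$ of $H(f) - f \in W_0^{1,\phix}(\Omega)$; by Lemma~\ref{lem:zero_bound_char}, $g \in W^{1,\phix}(\Rn)$ and is quasicontinuous. Then $\tilde f + g$ agrees with $H(f)$ on $\Omega$ and with $\tilde f$ on $\Rn \setminus \Omega$. Since $B$ is convex and $\tilde f = m$ on $\partial\Omega \cap B$, the function $h := (m - \tilde f)\chi_{B \setminus \Omega}$ is Lipschitz on $B$: continuous across the interface, Lipschitz on each piece, and any segment between the pieces crosses $\partial\Omega \cap B$ by convexity. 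A direct check on each piece gives $u = (\tilde f + g) + h$ on $B$, so $u \in W^{1,\phix}(B)$, and $u$ is quasicontinuous on $B$ as a sum of a quasicontinuous function and a Lipschitz one.

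\emph{Main obstacle: the admissible variation.} By Lemma~\ref{lem:min_cont} it suffices to test against nonnegative $v \in C_0^\infty(B)$. The natural test for $H(f)$ on $\Omega$ is the capped variation $\tilde v := \min(v|_\Omega, m - H(f)) \geq 0$. The main obstacle is verifying $\tilde v \in W_0^{1,\phix}(\Omega)$: naive support-based arguments fail because $\{v > 0\} \cap \{H(f) < m\}$ can touch $\partial\Omega$ (exactly at the irregular points we wish to study). To handle this, observe that $m - u \in W^{1,\phix}(B)$ by Part~(i) and $m - u \equiv 0$ on $B \setminus \Omega$. Set $W := \min(v, m - u)$ on $B$: since $v \geq 0$, $W \equiv 0$ on $B \setminus \Omega$, and $0 \leq W \leq v$ forces $W$ to have compact support in $B$. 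Extending $W$ by zero to $\Rn$ yields a quasicontinuous element of $W^{1,\phix}(\Rn)$ whose restriction to $\Omega$ equals $\tilde v$, so Lemma~\ref{lem:zero_bound_char} gives $\tilde v \in W_0^{1,\phix}(\Omega)$.

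\emph{Closing the estimate.} Minimality of $H(f)$ applied to the admissible variation $\tilde v$ gives
\[
\int_\Omega \phi(x,|\nabla H(f)|)\,dx \leq \int_\Omega \phi(x,|\nabla(H(f)+\tilde v)|)\,dx.
\]
On $\{v \leq m - H(f)\}$, $\nabla(H(f)+\tilde v) = \nabla(H(f)+v)$; on $\{v > m - H(f)\}$, $H(f) + \tilde v \equiv m$, so its gradient vanishes. Subtracting $\int_{\Omega \setminus B}\phi(x,|\nabla H(f)|)\,dx$ from both sides (using $v \equiv 0$ off $B$) reduces the estimate to
\[
\int_{\Omega \cap B}\phi(x,|\nabla H(f)|)\,dx \leq \int_{\Omega \cap B}\phi(x,|\nabla(H(f)+v)|)\,dx.
\]
Since $|\nabla u| = 0$ and $|\nabla(u+v)| = |\nabla v|$ on $B \setminus \Omega$, adding the nonnegative term $\int_{B\setminus\Omega}\phi(x,|\nabla v|)\,dx$ to the right-hand side yields the superminimizer inequality for $u$ on $B$, completing the proof.
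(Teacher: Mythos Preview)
Your proof is correct and follows essentially the same route as the paper's: extend $f$ so that $u-f\in W_0^{1,\phix}(\Omega)$ and invoke Lemma~\ref{lem:zero_bound_char} to obtain $u\in W^{1,\phix}(B)$ and quasicontinuity, then test the minimizer $H(f)$ in $\Omega$ against the truncated variation $\min\{v,\,m-u\}$ and close the estimate. The only cosmetic differences are that you make the Lipschitz extension explicit via the convexity argument for $h$ (the paper simply asserts $f\in W^{1,\phix}(B)$ after the piecewise extension), and you go directly to $W_0^{1,\phix}(\Omega)$ through the zero extension, whereas the paper first lands in $W_0^{1,\phix}(B)$ via \cite[Lemma~2.11]{HarH19} before descending to $W_0^{1,\phix}(B\cap\Omega)$.
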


\begin{figure}[ht!]
\begin{tikzpicture}[line cap=round,line join=round,>=triangle 45,x=1.0cm,y=1.0cm]
\clip(-6.5,-3) rectangle (7.32,3.2);
\draw [shift={(3.,3.)},line width=1.pt]  plot[domain=3.9269908169872414:5.497787143782138,variable=\t]({1.*4.242640687119286*cos(\t r)+0.*4.242640687119286*sin(\t r)},{0.*4.242640687119286*cos(\t r)+1.*4.242640687119286*sin(\t r)});
\draw [shift={(-4.,-3.)},line width=1.pt]  plot[domain=0.6435011087932844:2,variable=\t]({1.*5.*cos(\t r)+0.*5.*sin(\t r)},{0.*5.*cos(\t r)+1.*5.*sin(\t r)});
\draw [line width=1.pt] (0.,0.) circle (3.cm);
\draw [line width=1.pt] (-6.2,-1.)-- (-4.,-1.);
\draw [line width=1.pt] (-4.,-1.)-- (-4.,-2.);
\draw [line width=1.pt] (-4.,-2.)-- (-6.2,-2.);
\draw [line width=1.pt] (-6.2,-2.)-- (-6.2,-1.);
\draw [line width=1.pt] (4.,1.)-- (4.,2.);
\draw [line width=1.pt] (4.,2.)-- (5.7,2.);
\draw [line width=1.pt] (5.7,2.)-- (5.7,1.);
\draw [line width=1.pt] (5.7,1.)-- (4.,1.);
\draw (-6.1,-1.2) node[anchor=north west] {$u = H(f)$};
\draw (4.1,1.7) node[anchor=north west] {$u = m$};
\draw [->,line width=1.pt] (-4.,-1.6) -- (-1.8,-1.);
\draw [->,line width=1.pt] (4.,1.4) -- (1.6,0.82);
\draw (0.16,0.4) node[anchor=north west] {$x$};
\draw (-5.98,2.4) node[anchor=north west] {$\partial\Omega$};
\draw (0.54,2.8) node[anchor=north west] {$B$};
\draw (-5.34,0.6) node[anchor=north west] {$\Omega$};
\begin{scriptsize}
\draw [fill=black] (0.,0.) circle (2.0pt);
\end{scriptsize}
\end{tikzpicture}
\caption{Definition of $u$ in Lemma \ref{lem:B_supersol}}\label{fig:kuva}
\end{figure}
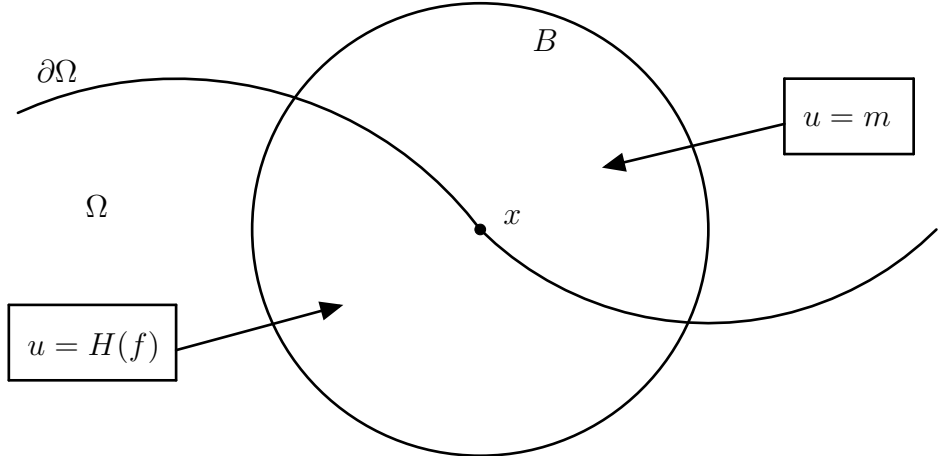

\begin{proof}
Extend $f$ to a Lipschitz function defined on $\overline{\Omega}$ in such a way that $f \leq m$.
Extend $f$ further by setting $f := m$ on $\overline{B} \setminus \Omega$.
Then $f \in W^{1, \phix}(B)$.
Let
\[
v := \left\{\begin{array}{ll}
	u - f & \text{in }B \cup \Omega, \\
	0 & \text{otherwise.}
\end{array}\right.
\]
As $v = H(f) - f$ in $\Omega$, we have $v \in W_0^{1, \phix}(\Omega)$.
It follows from Theorem \ref{thm:H(f)}, that $v$ is continuous in $\Omega$.
Since $v = 0$ in $\Rn \setminus \Omega$, it follows from Lemma \ref{lem:zero_bound_char} that $v$ is quasicontinuous  in $B$ and and belongs to $W^{1, \phix}(B)$.
It now follows that $u$ is quasicontinuous and belongs to $W^{1, \phix}(B)$.
By Corollary \ref{cor:comparison_cont_bound}, $u \leq m$ in $B$.

Now we show that $u$ is a superminimizer.
Let $\eta \in C_0^\infty(B)$ be nonnegative and let $\eta' := \min\{\eta, m - u\}$.
It is easy to see that $\eta'$ is quasicontinuous and nonnegative in $B$.
By \cite[Lemma~2.11]{HarH19}, $\eta' \in W_0^{1, \phix}(B)$.
Since $\eta' = 0$ in  $B \setminus \Omega$, it follows from Lemma \ref{lem:zero_bound_char} that $\eta' \in W_0^{1, \phix}(B \cap \Omega)$.
Now we have
\[
\int_{\{\eta \neq 0\}}\phi(x, |\nabla u|) \, dx
= \int_{\{\eta' \neq 0\}}\phi(x, |\nabla u|) \, dx
\leq \int_{\{\eta' \neq 0\}}\phi(x, |\nabla (u + \eta')|) \, dx.
\]
The equality above follows from the facts that $\{\eta' = 0 \neq \eta\} \subset \{u = m\}$ and $\nabla u = 0$ almost everywhere in $\{u = m\}$.
The inequality follows from the facts that $\{ \eta' \neq 0 \} \subset \Omega$ and $u$ is a minimizer in $\Omega$.
Since $u + \eta' = \min \{u + \eta, m\}$, we have $|\nabla (u + \eta')| \leq |\nabla (u + \eta)|$.
And since $\eta' \neq 0$ implies $\eta \neq 0$, we get
\[
\int_{\{\eta' \neq 0\}}\phi(x, |\nabla (u + \eta')|) \, dx
\leq \int_{\{\eta \neq 0\}}\phi(x, |\nabla (u + \eta)|) \, dx.
\]
Combining the estimates above and using Lemma \ref{lem:min_cont}, we see that $u$ is a superminimizer in $B$.
\end{proof}

We are now ready to prove our main result.
The proof is again similar to the proof of \cite[Theorem~1.1]{AdaBB14}, but is included here for completeness.

\begin{thm}\label{thm:main}
Let $\phi \in \Phi_c(\Rn)$ be strictly convex and satisfy (A0), (A1), (A1-n), (aDec) and (aInc).
Then the set of irregular boundary points has zero capacity.
\end{thm}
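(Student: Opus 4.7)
The plan is to adapt the argument of [AdaBB14, Theorem~1.1], combining the superminimizer construction of Lemma~\ref{lem:B_supersol} with the quasieverywhere identity $u=u^*$ from Lemma~\ref{lem:equals_qe}. First I reduce to a countable family of Lipschitz data. Fix $\{f_k\}\subset\lip(\partial\Omega)$ dense in $C(\partial\Omega)$ in the uniform norm. Since the energy depends on $u$ only through $|\nabla u|$, one has $H(f+c)=H(f)+c$ for constants $c$, and consequently $H_{g+c}=H_g+c$ for $g\in C(\partial\Omega)$; combining this with Corollary~\ref{cor:comparison_cont_bound} one sandwiches $H_g$ between $H_{f_k}\pm\|g-f_k\|_\infty$. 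Therefore a boundary point $x$ is regular if and only if $\lim_{y\to x,\,y\in\Omega}H_{f_k}(y)=f_k(x)$ for every $k$, and by (C7) it suffices to prove that for each Lipschitz $f$ the irregular set $I_f$ satisfies $C_\phix(I_f)=0$.

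For fixed Lipschitz $f$, decompose $I_f\subset\bigcup_{n=1}^\infty(A_n^+\cup A_n^-)$, where
\[
A_n^-:=\bigl\{x\in\partial\Omega:\liminf_{y\to x,\,y\in\Omega}H_f(y)<f(x)-1/n\bigr\}
\]
and $A_n^+$ is defined analogously with $\limsup$ and $+1/n$; by (C7) it is enough to show each piece has zero capacity. For $A_n^-$, fix $x_0\in A_n^-$ and use continuity of $f$ to pick $r>0$ with $|f-f(x_0)|<1/(4n)$ on $B(x_0,r)\cap\partial\Omega$. Set $m:=f(x_0)-1/(2n)$ and $g:=\min(f,m)\in\lip(\partial\Omega)$. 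Since $f>m$ on $B(x_0,r)\cap\partial\Omega$ we have $g\equiv m$ on this set, while $\sup g=m$ because $\sup f\ge f(x_0)>m$. Lemma~\ref{lem:B_supersol} then provides the quasicontinuous superminimizer
\[
u:=H(g)\,\chi_\Omega+m\,\chi_{B\setminus\Omega}\qquad\text{on }B:=B(x_0,r),
\]
and Lemmas~\ref{lem:lsc} and~\ref{lem:equals_qe} yield $u=u^*$ quasieverywhere in $B$. For any $y\in B\cap\partial\Omega$ where $u(y)=u^*(y)$, the global bound $u\le m$ combined with $u(y)=m$ forces $\essliminf_{z\to y}u(z)=m$; by continuity of $H(g)$ (Theorem~\ref{thm:H(f)}) this reads $\lim_{z\to y,\,z\in\Omega}H(g)(z)=m$. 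Since $g\le f$, Lemma~\ref{lem:comparison_principle} gives $H(g)\le H_f$ on $\Omega$, whence
\[
\liminf_{z\to y,\,z\in\Omega}H_f(z)\ge m=f(x_0)-\tfrac{1}{2n}>f(y)-\tfrac{1}{n},
\]
using $|f(y)-f(x_0)|<1/(4n)$. Hence no such $y$ belongs to $A_n^-$, so $A_n^-\cap B\subset\{u\ne u^*\}$, which has zero capacity. A countable Lindel\"of subcover of $A_n^-$ by balls of this form, together with (C7), yields $C_\phix(A_n^-)=0$.

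For $A_n^+$, the functional $v\mapsto\int_\Omega\phi(x,|\nabla v|)\,dx$ is invariant under $v\mapsto -v$, so $H(-f)=-H(f)$; Lipschitz regularity of $f$ also gives $H_f=H(f)$ and thus $H_{-f}=-H_f$. Consequently $A_n^+$ for $f$ is precisely $A_n^-$ for $-f\in\lip(\partial\Omega)$, and the previous step applied to $-f$ gives $C_\phix(A_n^+)=0$. Summing over $n$ and $k$ completes the argument.

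The main obstacle is the choice of the auxiliary function $g$ in the second paragraph: $g$ must simultaneously (a) satisfy the sup-on-ball hypothesis of Lemma~\ref{lem:B_supersol}, (b) lie below $f$ so that Lemma~\ref{lem:comparison_principle} yields a one-sided comparison in the correct direction, and (c) agree on the ball with a value $m$ that is close enough to $f(x_0)$ for the quantitative lower-irregularity at $x_0$ to survive. The prescription $g:=\min(f,m)$ with $m:=f(x_0)-1/(2n)$ meets all three requirements, thanks to the oscillation bound $|f-f(x_0)|<1/(4n)$. The sign symmetry in the third paragraph then avoids the need to develop a dual of Lemma~\ref{lem:B_supersol} for subminimizers.
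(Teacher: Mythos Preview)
Your proof is correct and rests on the same two pillars as the paper's: Lemma~\ref{lem:B_supersol} to manufacture a quasicontinuous superminimizer across a boundary ball, and Lemma~\ref{lem:equals_qe} to conclude that the exceptional set where $u\ne u^*$ has zero capacity. The difference is purely in how the countable decomposition is organized. The paper fixes, for each $j$, a finite cover of $\partial\Omega$ by balls $B_{j,k}$, uses the explicit bump data $v_{j,k,q}=q\,v_{j,k}$, applies Lemma~\ref{lem:B_supersol} once per triple $(j,k,q)$ to show $C_\phix(I_{j,k,q})=0$, and then proves $I=\bigcup_{j,k,q}I_{j,k,q}$ by a comparison argument. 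You instead first reduce to a countable dense Lipschitz family, split each $I_{f}$ into the quantitative level sets $A_n^\pm$, and for $A_n^-$ localize around each point with the tailored truncation $g=\min(f,m)$ before invoking a Lindel\"of subcover; your symmetry $H(-f)=-H(f)$ for $A_n^+$ mirrors the paper's ``by considering $-v$ if necessary''. Your route trades the paper's prefabricated test functions for a point-by-point construction plus a covering step, but the two arguments are of comparable length and neither is materially simpler than the other.
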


\begin{proof}
Denote the set of irregular points by $I$. To prove that $I$ is of capacity zero, we construct a countable number of sets $I_{j,k,q} \subset I$, such that $C_{\phix}(I_{j,k,q}) = 0$, and the union of sets $I_{j,k,q}$ is equal to $I$. 

For any positive integer $j$ we can cover $\partial\Omega$ with a finitely many balls $B_{j,k} := B(x_{j,k}, 1/j)$, $1 \leq k \leq N_j$.
Let $v_{j,k}$ be a Lipschitz function such that $\supp v_{j,k} \subset 3B_{j,k}$, $0 \leq v_{j,k} \leq 1$, and $v_{j,k} = 1$ on $2B_{j,k}$.
For any positive $q \in \mathbb{Q}$, let $v_{j,k,q} = qv_{j,k}$.
Consider the sets
\[
I_{j,k,q} :=
\{ x \in \overline{B}_{j,k} \cap \partial\Omega : \liminf_{\Omega \ni y \to x} H(v_{j,k,q})(y) < v_{j,k,q}(x) = q \}.
\]
Then $I_{j,k,q} \subset I$. To show that $I_{j,k,q}$ is of capacity zero, let
\[
u_{j,k,q} := \left\{ \begin{array}{ll}
	H(v_{j,k,q}) & \text{in }\Omega, \\
	q & \text{in }2B_{j,k} \setminus \Omega.
\end{array}\right.
\]
By Lemma \ref{lem:B_supersol}, $u_{j,k,q}$ is a quasicontinuous superminimizer in $2B_{j,k}$, and by Corollary \ref{cor:comparison_cont_bound}, $u_{j,k,q} \leq q$ in $\Omega$.
Since $u_{j,k,q}$ is continuous in $\Omega$, for every  $x \in \Omega$ we have
\[
u_{j,k,q}^*(x) :=
\essliminf_{y \to x} u_{j,k,q}(y) =
u_{j,k,q}(x) =
H(v_{j,k,q})(x).
\]
By Lemma \ref{lem:lsc}, $u_{j,k,q}^*$ is lower semicontinuous, and, by Lemma \ref{lem:equals_qe}, $u_{j,k,q}^* = u_{j,k,q}$ quasieverywhere in $2B_{j,k}$.
Since $u_{j,k,q}^* \leq q$, we have  
\[
q =
u_{j,k,q}(x) =
u_{j,k,q}^*(x) =
\liminf_{\Omega \ni y \to x} u_{j,k,q}^*(y) =
\liminf_{\Omega \ni y \to x} H(v_{j,k,q})(y)
\]
for quasievery $x \in \overline{B}_{j,k} \cap \partial\Omega$.
Hence $I_{j,k,q}$ is of capacity zero.

Then we show that every point of $I$ belongs to some $I_{j,k,q}$.
Let therefore $x \in I$.
Then there exists a function $v \in C(\partial\Omega)$ such that
\[
\lim_{\Omega \ni y \to x} H_v(y) \neq
v(x).
\]
By considering $-v$ if necessary, we may assume that $\liminf_{\Omega \ni y \to x} H_v(y) < v(x)$, and by adding a constant, we may assume that $v \geq 0$. Since $v$ is continuous, we can find a ball $B_{j,k} \ni x$ such that
\[
m :=
\inf_{3B_{j,k} \cap \partial\Omega} v >
\liminf_{\Omega \ni y \to x} H_v(y).
\]
We can then choose $q \in \mathbb{Q}$ such that $m > q > \liminf_{\Omega \ni y \to x} H_v(y)$.
Then $v_{j,k,q} \leq v$ on $\partial\Omega$, and it follows by Corollary \ref{cor:comparison_cont_bound}, that
\[
\liminf_{\Omega \ni y \to x} H(v_{j,k,q})(y) \leq
\liminf_{\Omega \ni y \to x} H_v(y) <
q =
v_{j,k,q}(x).
\]
But then $x \in I_{j,k,q}$.

We have now shown that
\[
 I =
 \bigcup_{j=1}^\infty \bigcup_{k=1}^{N_j} \bigcup_{\substack{q\in\mathbb{Q} \\ q>0}} I_{j,k,q}.
\]
It now follows by subadditivity that $I$ is of zero capacity.
\end{proof}


\section{Semiregular boundary points}

In this section we give some characterizations of semiregular boundary points.
We follow the ideas in \cite[Section~8]{AdaBB14}, where characterizations of semiregular boundary points are given in the variable exponent case.

\begin{defn}\label{defn:semiregular}
A point $x \in \partial\Omega$ is semiregular, if it is irregular, and the limit
\[
\lim_{\Omega \ni y \to x} H_f(y)
\]
exists for every $f \in C(\partial\Omega)$.
\end{defn}

First we prove some lemmas.

\begin{lem}\label{lem:jono}
Let $\phi \in \Phi_c(\Rn)$ satisfy (A0), (A1), (aInc) and (aDec), and let $K \subset \Rn$ be compact with $C_\phix(K)=0$. 
Then there exists a sequence of functions $\xi_i \in C^\infty(\Rn)$, with the following properties:
\begin{itemize}
\item[(i)] $0\leq \xi_i\leq 1$ in $\Rn$ and $\xi_i = 0$ in a neighbourhood of $K$,
\item[(ii)] $\lim_{i \to \infty} \|1-\xi_i\|_\phix=0=\lim_{i\to\infty}\|\nabla\xi_i\|_\phix$,
\item[(iii)] $\lim_{i \to \infty}\xi_i(x)=1$ and $\lim_{i \to \infty}\nabla\xi_i(x)=0$ for almost every $x\in\Rn$.
\end{itemize}
\end{lem}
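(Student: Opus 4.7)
The plan is to build $\xi_i := 1 - w_i$, where $w_i$ is smooth, supported in a fixed compact set, bounded by $1$, identically $1$ on an open neighbourhood of $K$, and with $\|w_i\|_{1,\phix} \to 0$. Given such $w_i$, properties (i) and (ii) are immediate, and (iii) will follow by passing to a subsequence because $L^\phix$-convergence implies convergence in measure.

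To construct $w_i$, I use the definition of zero capacity: for each $i$ pick $u_i \in W^{1,\phix}(\Rn)$ with $u_i \geq 1$ on an open $U_i \supset K$ and modular $< 2^{-i}$. Compactness of $K$ allows localization: fix a ball $B$ with $K \Subset B$ and a cutoff $\eta \in C_c^\infty(B)$ equal to $1$ on a fixed neighbourhood of $K$. Then $\eta u_i$ has compact support in $B$ and still satisfies $\eta u_i \geq 1$ on an open set containing $K$. The product rule together with the doubling inequality $\phi(x, s+t) \lesssim \phi(x, s) + \phi(x, t)$ and $\phi(x, Ct) \lesssim \phi(x, t)$ for the fixed constant $C = \|\nabla \eta\|_\infty$ shows that the modulars of $\eta u_i$ and $\nabla(\eta u_i)$ stay small. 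Truncating further via $u_i' := \max(\min(\eta u_i, 1), 0)$ only decreases the modulars, so I may assume additionally $0 \leq u_i' \leq 1$ and $u_i' \equiv 1$ on some open $V_i \supset K$, with all the $u_i'$ supported inside $B$.

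Now I mollify: set $w_i := u_i' * \rho_{\epsilon_i}$ for standard mollifiers $\rho_{\epsilon_i}$. By compactness of $K$, choose $\delta_i > 0$ with $\{d(\cdot, K) < \delta_i\} \subset V_i$; then any $\epsilon_i \leq \delta_i/2$ forces $w_i \equiv 1$ on the open neighbourhood $\{d(\cdot, K) < \delta_i/2\}$ of $K$. Since all $u_i'$ are supported in a fixed bounded ball, Lemma~\ref{lem:density} applies and $u_i' * \rho_\epsilon \to u_i'$ in $W^{1,\phix}$ as $\epsilon \to 0$; the standard bound $|\nabla(u_i' * \rho_\epsilon)| \leq (|\nabla u_i'|) * \rho_\epsilon$ handles the gradient. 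I may therefore shrink $\epsilon_i$ further so that $\|w_i - u_i'\|_{1,\phix} < 2^{-i}$, which combined with $\|u_i'\|_{1,\phix} \to 0$ yields $\|w_i\|_{1,\phix} \to 0$, hence (ii).

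The main technical obstacle is justifying the $W^{1,\phix}$-convergence of the mollifications, because Lemma~\ref{lem:density} is stated only for bounded domains and here $\phi$ is defined on $\Rn$. The cutoff step is designed precisely to neutralize this: each $u_i'$ lives in a fixed bounded ball, so convolution converges in $L^\phix$ by dominated convergence (using $\|u_i'\|_\infty \leq 1$ together with $\phi(x, 2) \lesssim 1$ from \azero{} and doubling), and the gradient case follows from the same estimate applied to $|\nabla u_i'| \in L^\phix$. Once (ii) is in hand, $L^\phix$-convergence of $1 - \xi_i$ and of $\nabla \xi_i$ to $0$ gives convergence in measure, and a diagonal subsequence produces pointwise a.e.\ convergence, which yields (iii).
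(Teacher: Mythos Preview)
Your proof is correct and follows essentially the same strategy as the paper: take capacity test functions with small modular, localize with a cutoff, truncate to $[0,1]$, mollify to obtain smooth functions identically $1$ near $K$, and pass to a subsequence for the a.e.\ convergence in (iii). The only cosmetic differences are the order of the cutoff and truncation steps and that the paper mollifies $u$ first and then multiplies by $\eta$ (with $\eta$ depending on $i$) rather than mollifying the already localized function with a fixed $\eta$; neither changes the substance of the argument.
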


\begin{proof}
Let $i$ be a positive integer and let $u$ be a test function for capacity of $K$ with
\[
\int_{\Rn}\phi(x,|u|)+\phi(x,|\nabla u|)\,dx < \frac{1}{i}.
\]
Since $\min\{u,1\}$ is also a test function, we may assume that $0 \leq u \leq 1$.
Let $U$ be an open set containing $K$, such that $u = 1$ in $U$.
Let $\eta \in C_0^\infty(\Rn)$ be such that $\eta = 1$ on $U$, $0\leq\eta\leq 1$ in $\Rn$ and $|\nabla\eta| \leq 1$.
Then $u\eta \in W^{1,\phix}(\Rn)$, and using triangle inequality we get
\[
\| u\eta \|_{1,\phix} \leq \| u\eta \|_{\phix} + \| u\nabla\eta \|_{\phix} + \| \eta\nabla u \|_{\phix} \leq 2\| u \|_{\phix} + \| \nabla u \|_{\phix}.
\]
By \cite[Lemma~3.2.11]{HarH19b} there is a constant $c$ such that
\[
\| u \|_{\phix} \leq c \max\{\varrho_\phix(u),\varrho_\phix(u)^\frac{1}{q}\},
\]
where $q$ is the exponent from (aDec). Since $\varrho_\phix(u) < 1/i \leq 1$, the maximum above equals $\varrho_\phix(u)^\frac{1}{q}$, hence $\| u \|_{\phix} \leq c(1/i)^\frac{1}{q}$.
Similarly we get $\| \nabla u \|_{\phix} \leq c(1/i)^\frac{1}{q}$.
Combining all the estimates gives $\| u\eta \|_{1,\phix} < 3c(1/i)^\frac{1}{q}=:\ve_i$.

Let now $B$ be an open ball such that $\spt\eta \Subset B$.
By \cite[Theorem~6.4.7]{HarH19b} and \cite[Lemma~4.2.3]{HarH19b} there exists a sequence of functions $\mu_j \in C^\infty(B) \cap W^{1,\phix}(B)$ converging to $u$ in $W^{1,\phix}(B)$.
Since $u=1$ on $U$, and the proof of \cite[Theorem~6.4.7]{HarH19b} uses standard mollifiers, we have $0 \leq \mu_j \leq 1$ on $B$.
Moreover, we may assume that $\mu_j=1$ on an open set $V$, with $K\subset V\subset U$.
Now $\mu_j\eta \in C_0^\infty(\Rn)$ and $\mu_j\eta \to u\eta$ in $W^{1,\phix}(\Rn)$.
Let $j_i$ be an index such that $\|\mu_{j_i}\eta\|_{1,\phix}<2\ve_i$, and let $\nu_i:=\mu_{j_i}\eta$.

Now $\|\nu_i\|_{\phix}\leq\|\nu_i\|_{1,\phix}<2\ve_i\to 0$ as $i\to\infty$.
Similarly $\|\nabla\nu_i\|_\phix \to 0$.
It now follows from \cite[Lemma~3.3.6]{HarH19b} that we may choose a subsequence $\nu_{i_k}$ such that $\nu_{i_k}$ and $\nabla\nu_{i_k}$ converge to $0$ pointwise almost everywhere.
Choosing $\xi_k := 1-\nu_{i_k}$ we get a sequence satisfying properties (i), (ii) and (iii).
\end{proof}

Next we prove a lemma concerning extension of lsc-regularized superminimizers.

\begin{lem}\label{lem:extension}
Let $\phi \in \Phi_c(\Rn)$ be strictly convex and satisfy (A0), (A1), (aInc) and (aDec).
Let $F \subset \Omega$ be relatively closed with $C_\phix(F) = 0$, and let $u \in W^{1,\phix}(\Omega\setminus F)$ be a bounded lsc-regularized superminimizer in $\Omega \setminus F$.
Then $u$ has a unique bounded lsc-regularized extension $v \in W^{1,\phix}(\Omega)$, given by
\[
v(x) := \essliminf_{\Omega\setminus F \ni y \to x} u(x).
\]
Moreover, $v$ is a superminimizer in $\Omega$.
\end{lem}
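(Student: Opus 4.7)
The plan is to verify in order that $v$ is well-defined and bounded, that $v = v^*$, that $v$ belongs to $W^{1,\phix}(\Omega)$, that $v$ is a superminimizer on $\Omega$, and finally that such an extension is unique.

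First, $C_\phix(F) = 0$ forces $|F| = 0$ via \azero{} applied to any admissible function in the capacity infimum. Hence the essential liminf defining $v$ is insensitive to removing $F$, and since $u$ is lsc-regularized on $\Omega\setminus F$ we obtain $v = u$ on $\Omega\setminus F$; on $F$ the essliminf inherits the essential bounds of $u$. Therefore $v$ is bounded and $v = u$ a.e.\ on $\Omega$. Applying the essliminf a second time gives $v^*(x) = \essliminf_{y\to x}v(y) = \essliminf_{y\to x}u(y) = v(x)$, so $v$ is automatically lsc-regularized.

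For Sobolev membership, let $g_j$ be the zero extension to $F$ of $\partial_j u$; the goal is $\int_\Omega v\,\partial_j\varphi\,dx = -\int_\Omega g_j\varphi\,dx$ for every $\varphi \in C_0^\infty(\Omega)$. Since $F \cap \spt\varphi$ is compact of zero capacity, Lemma~\ref{lem:jono} yields cutoffs $\xi_i \in C^\infty(\Rn)$ vanishing in a neighbourhood of $F \cap \spt\varphi$ with $\xi_i \to 1$ and $\nabla\xi_i \to 0$ almost everywhere and in $L^\phix$. Then $\xi_i\varphi \in C_0^\infty(\Omega \setminus F)$, so legitimate integration by parts on $\Omega \setminus F$ gives
\[
\int_\Omega v\,\partial_j\varphi\,dx = -\int_\Omega g_j\xi_i\varphi\,dx - \int_\Omega u\varphi\,\partial_j\xi_i\,dx + \int_\Omega u(1-\xi_i)\partial_j\varphi\,dx.
\]
Boundedness of $u$ on $\spt\varphi$, the local embedding $L^\phix \hookrightarrow L^1$ on bounded sets, and parts (ii)--(iii) of Lemma~\ref{lem:jono} drive the last two terms to $0$ as $i\to\infty$, while dominated convergence handles the first.

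The superminimizer property is the main substantive step. By Lemma~\ref{lem:min_cont} it suffices to test against nonnegative $\eta \in C_0^\infty(\Omega)$. Applying Lemma~\ref{lem:jono} to the compact set $F \cap \spt\eta$ produces cutoffs $\xi_i$ such that $\xi_i\eta$ has compact support in $\Omega\setminus F$, hence is admissible in the superminimizer inequality for $u$ on $\Omega\setminus F$:
\[
\int_{\Omega\setminus F}\phi(x,|\nabla u|)\,dx \le \int_{\Omega\setminus F}\phi(x,|\nabla(u+\xi_i\eta)|)\,dx.
\]
Because $|F| = 0$, both sides equal the corresponding integrals over $\Omega$ with $u$ replaced by $v$. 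A direct estimate using boundedness of $\eta$, $\nabla\eta$ together with Lemma~\ref{lem:jono}(ii) yields $\xi_i\eta \to \eta$ in $W^{1,\phix}(\Omega)$, and under (aDec) norm convergence on bounded sets implies modular convergence, producing the desired limit $\int_\Omega\phi(x,|\nabla(v+\eta)|)\,dx$ on the right-hand side. I expect this final limit passage to be the main obstacle: one must check that $\nabla(v + \xi_i\eta)$ stays bounded in $L^\phix$ uniformly in $i$, so that (aDec) together with continuity of the modular in norm (analogously to the argument already carried out in Lemma~\ref{lem:min_cont}) can be invoked. Uniqueness then follows immediately: any other bounded lsc-regularized extension $w$ agrees with $v$ a.e.\ on $\Omega$, and the identities $w = w^*$ and $v = v^*$ give $w(x) = \essliminf_{y\to x}w(y) = \essliminf_{y\to x}v(y) = v(x)$ pointwise.
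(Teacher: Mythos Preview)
Your proposal is correct and follows essentially the same route as the paper's proof: both use the cutoff sequence of Lemma~\ref{lem:jono} applied to the compact set $F\cap\spt\eta$ (resp.\ $F\cap\spt\varphi$) to establish the weak derivative identity via integration by parts, and then again to pass to the limit in the superminimizer inequality, with \adec{} and continuity of the modular in norm (the paper cites \cite[Lemma~3.1.6]{HarH19b}, which is exactly the tool you anticipate needing) handling the limit $\varrho_\phix(\nabla(v+\xi_i\eta))\to\varrho_\phix(\nabla(v+\eta))$. The only cosmetic differences are that the paper cites \cite[Lemma~4.1]{BarHH18} for $|F|=0$ rather than arguing via \azero{} directly, and places the uniqueness remark earlier; your explicit invocation of Lemma~\ref{lem:min_cont} to reduce to $C_0^\infty$ test functions is a clean touch that the paper leaves implicit.
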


\begin{proof}
By \cite[Lemma~4.1]{BarHH18}, we have $|F| = 0$. Existence of $v$ is therefore trivial, and boundedness of $v$ follows easily from boundedness of $u$.
Since
\[
\essliminf_{\Omega \ni y \to x} v(x) = \essliminf_{\Omega\setminus F \ni y \to x} u(x) = v(x)
\]
for all $x\in\Omega$, and $v$ is lsc-regularized.
The equality above also implies that $v$ is unique.
That $v\in L^{\phix}(\Omega)$ follows directly from the facts that $u\in L^{\phix}(\Omega\setminus F)$ and $|F|=0$.

Now we show that $\partial_j v = \partial_j u$ for $j=1,2,\dots,n$.
Let $\eta \in C_0^\infty(\Omega)$, and let $K := F \cap \spt \eta$.
Then $K$ is compact and $C_\phix(K)=0$, and we can find a sequence $\{\xi_i\}$ as in Lemma \ref{lem:jono}.
Now $\eta\xi_i \in C_0^\infty(\Omega\setminus F)$.
The definition of weak derivative gives
\[
0
=\int_{\Omega\setminus F} u\partial_j(\eta\xi_i)+\eta\xi_i\partial_j u\,dx
=\int_\Omega v\eta\partial_j\xi_i+\xi_i(v\partial_j\eta+\eta\partial_j u)\,dx,
\]
where we have also used the fact that $v=u$ almost everywhere in $\Omega$.
Hölder's inequality gives
\[
\left|\int_\Omega v\eta\partial_j\xi_i\,dx\right| \leq \int_\Omega |v\eta\partial_j\xi_i|\,dx \leq 2\|\partial_j\xi_i\|_\phix\|v\eta\|_{\phi^*(\cdot)}.
\]
By \cite[Proposition~2.4.13]{HarH19b} and \cite[Lemma~3.7.6]{HarH19b} $\phi^*$ satisfies (aDec) and (A0), which implies that $\phi^*(x,1)\lesssim 1$.
Since $v\eta$ is bounded, it follows that $v\eta \in L^{\phi^*(\cdot)}(\Omega)$.
It now follows from property (ii) in Lemma \ref{lem:jono}, that the right-hand side in the inequality above approaches $0$ as $i\to\infty$.
By \cite[Corollary~3.7.9]{HarH19b}, $L^{\phix}(\Omega)\subset L^p(\Omega)$, where $p$ is the exponent from (aInc).
Since $\eta \in C^\infty_0(\Omega)$ and $v=u \in W^{1, \phi(\cdot)}(\Omega)$, it follows that $(v\partial_j\eta+\eta\partial_j u) \in L^1(\Omega)$.
Since $\xi_i\leq 1$ and $\xi_i(x)\to 1$ for almost every $x\in\Omega$, dominated convergence implies that
\[
\lim_{i\to\infty}\int_\Omega \xi_i(v\partial_j\eta+\eta\partial_j u)\,dx=\int_\Omega (v\partial_j\eta+\eta\partial_j u)\,dx.
\]
Combining the results above shows that $\partial_j v=\partial_j u$.

To complete the proof, we need to show that $v$ is a superminimizer in $\Omega$.
Let $0 \leq \mu \in C_0^\infty$ and let $\xi_i$ be as above.
Denote $w_i:=v+\eta\xi_i$ and $w:=v+\eta$.
Since $u$ is a superminimizer in $\Omega\setminus F$ and $\eta\xi_i\in C_0^\infty(\Omega\setminus F)$, we have
\[
\int_\Omega \phi(x,|\nabla w_i|)\,dx
=\int_{\Omega\setminus F} \phi(x,|\nabla (u+\eta\xi_i)|)\,dx
\geq \int_{\Omega\setminus F} \phi(x,|\nabla u|)\,dx
=\int_{\Omega} \phi(x,|\nabla v|)\,dx.
\]
The claim follows, if we can show that $\lim_{i\to\infty}\varrho_\phix(\nabla w_i)=\varrho_\phix(\nabla w)$.
Since
\[
\big{|}|\nabla w_i|-|\nabla w|\big{|} \leq |\nabla(w_i-w)| = |\nabla(\eta\xi_i-\eta)|,
\]
we have
\begin{align*}
& \varrho_\phix(|\nabla w_i|-|\nabla w|) \leq \varrho_\phix(\nabla (\eta\xi_i-\eta))=\varrho_\phix(\nabla\eta(\xi_i-1)+\eta\nabla\xi_i) \\
& \leq \varrho_\phix(|\nabla\eta(\xi_i-1)|+|\eta\nabla\xi_i|)\lesssim \varrho_\phix(\nabla\eta(\xi_i-1))+\varrho_\phix(\eta\nabla\xi_i),
\end{align*}
where the last inequality follows from (aDec).
Property (ii) of Lemma \ref{lem:jono} implies that $\|\eta\nabla\xi_i\|_\phix$ tends to $0$ as $i\to\infty$, and \cite[Lemma~3.2.11]{HarH19b} then implies that $\lim_{i\to\infty}\varrho_\phix(\eta\nabla\xi_i)=0$.
Similarly $\lim_{i\to\infty}\varrho_\phix(\nabla\eta(\xi_i-1))=0$.
It now follows from \cite[Lemma~3.1.6]{HarH19b} that $\lim_{i\to\infty}\varrho_\phix(|\nabla w_i|)=\varrho_\phix(|\nabla w|)$, which completes the proof.
\end{proof}

Now we prove the following lemma, which is our main tool in characterizing semiregular boundary points  (cf. \cite[Theorem~8.1]{AdaBB14}).

\begin{lem}\label{lem:semireg}
Let $\phi\in \Phi_c (\Omega)$ be strictly convex and satisfy (A0), (A1), (A1-n), (aInc) and (aDec).
Let $V \subset \partial\Omega$ be relatively open.
Then the following are equivalent:
\begin{itemize}
\item[(a)] Every point of $V$ is semiregular.
\item[(b)] Every point of $V$ is irregular.
\item[(c)] $C_\phix(V) = 0$.
\item[(d)] If $f,g \in C(\partial\Omega)$ and $f = g$ on $\partial\Omega \setminus V$, then $H_f = H_g$.
\end{itemize}
\end{lem}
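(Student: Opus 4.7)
The plan is to prove the cycle (a)$\Rightarrow$(b)$\Rightarrow$(c)$\Rightarrow$(d)$\Rightarrow$(b) and, separately, (c)$\Rightarrow$(a). Of these, (a)$\Rightarrow$(b) is just Definition~\ref{defn:semiregular}; (b)$\Rightarrow$(c) comes from the Kellogg property (Theorem~\ref{thm:main}) and monotonicity of capacity, since (b) gives $V\subset I$ with $C_\phix(I)=0$; and (c)$\Rightarrow$(d) comes from the fact that $C_\phix(V)=0$ turns $f=g$ on $\partial\Omega\setminus V$ into $f=g$ quasieverywhere on $\partial\Omega$, to which Corollary~\ref{cor:comparison_cont_bound} applies twice. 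For (d)$\Rightarrow$(b), fix $x_0\in V$; by Urysohn's lemma applied to the disjoint closed subsets $\{x_0\}$ and $\partial\Omega\setminus V$ of $\partial\Omega$ there is $f_2\in C(\partial\Omega)$ with $f_2(x_0)=1$ and $f_2\equiv 0$ on $\partial\Omega\setminus V$; with $f_1\equiv 0$, hypothesis (d) forces $H_{f_1}=H_{f_2}$, so regularity of $x_0$ would absurdly yield $0=\lim H_{f_1}=\lim H_{f_2}=1$.

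The substantive step is (c)$\Rightarrow$(a), and for it I would use Lemma~\ref{lem:extension} to push $H_f$ across the negligible interface $\partial\Omega\cap B$ for a small ball at $x_0$. Fix $f\in C(\partial\Omega)$ and $x_0\in V$, and choose $B=B(x_0,r)$ small enough that $\overline B\cap\partial\Omega\subset V$. Set $G:=\Omega\cup B$ and $F:=B\cap\partial\Omega$; then $G$ is a bounded open set in which $F$ is relatively closed, and $C_\phix(F)\le C_\phix(V)=0$. Define $u$ on $G\setminus F=\Omega\cup(B\setminus\overline\Omega)$ by $u:=H_f$ on $\Omega$ and $u:=c$ on $B\setminus\overline\Omega$, with $c$ a suitable constant (if $B\setminus\overline\Omega=\emptyset$ this second piece is absent). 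Since $H_f$ is continuous and bounded by Theorem~\ref{thm:H(f)}, and constants are trivially continuous superminimizers, $u$ is a bounded lsc-regularized superminimizer on $G\setminus F$, and Lemma~\ref{lem:extension} yields an extension $v\in W^{1,\phix}(G)$ that is an lsc-regularized superminimizer on $G$.

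Applying the same lemma to $-u$, which is also a bounded lsc-regularized superminimizer (since $H_f$, being a minimizer, is a subminimizer, so $-H_f$ is a superminimizer), produces a second lsc-regularized superminimizer $\tilde v\in W^{1,\phix}(G)$ extending $-u$. As Sobolev elements $v$ and $-\tilde v$ coincide, because they agree on $G\setminus F$ and $|F|=0$. The common element $w\in W^{1,\phix}(G)$ is therefore both a superminimizer (from $v$) and a subminimizer (from $-\tilde v$), hence a minimizer on $G$. By Theorem~\ref{thm:H(f)}, $w$ has a continuous representative $W$ on $G$; since $W$ and $H_f$ are both continuous on $\Omega$ and agree almost everywhere there, they coincide on $\Omega$. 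Continuity of $W$ at $x_0$ then gives
\[
\lim_{\Omega\ni y\to x_0}H_f(y)=W(x_0),
\]
so the limit exists. Combined with the irregularity of $x_0$ (obtained from the already-proved (c)$\Rightarrow$(d) and the Urysohn argument of (d)$\Rightarrow$(b)), this shows that $x_0$ is semiregular.

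The main obstacle I expect is the verification that $w$ is a minimizer on all of $G$: Lemma~\ref{lem:extension} delivers only the super side, so one must exploit the energy symmetry $\phi(x,|\nabla(-\psi)|)=\phi(x,|\nabla\psi|)$ by applying the lemma a second time to $-u$, and then carefully identify the two lsc-regularized extensions $v$ and $-\tilde v$ as the same element of $W^{1,\phix}(G)$. A subsidiary technicality is choosing the constants in the two extensions compatibly; any $c\in[\liminf_{\Omega\ni y\to x_0}H_f(y),\limsup_{\Omega\ni y\to x_0}H_f(y)]$, paired with $-c$ in the $-u$ extension, makes the two representatives agree almost everywhere on $B\setminus\overline\Omega$.
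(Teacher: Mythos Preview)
Your cycle and the implications (a)$\Rightarrow$(b)$\Rightarrow$(c)$\Rightarrow$(d)$\Rightarrow$(b) are correct and match the paper's argument (the paper uses an explicit Lipschitz bump $(1-d(x_0,\cdot)/r)_+$ in place of your Urysohn function, but that is immaterial). The substantive step (c)$\Rightarrow$(a) also follows the paper's strategy of extending the solution across the null set via Lemma~\ref{lem:extension} and then invoking interior continuity of minimizers.

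However, there is a genuine gap in your (c)$\Rightarrow$(a): you apply Lemma~\ref{lem:extension} directly to $H_f$ for $f\in C(\partial\Omega)$. That lemma requires $u\in W^{1,\phix}(G\setminus F)$ and that $u$ be a bounded lsc-regularized superminimizer. For merely continuous boundary data, $H_f$ is defined in this paper as the pointwise supremum $\sup_{\lip\ni h\le f}H(h)$, and nothing available establishes that this supremum lies in $W^{1,\phix}(\Omega)$ or is a superminimizer; your citation of Theorem~\ref{thm:H(f)} for continuity and boundedness is also misplaced, since that theorem concerns $H(f)$ for $f\in W^{1,\phix}\cap L^\infty$, not the Perron-type object $H_f$. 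The paper closes this gap with a two-step argument: first carry out the extension for $f\in\lip(\partial\Omega)$, where $H(f)$ \emph{is} a continuous $W^{1,\phix}$-minimizer, obtaining a continuous extension $u_f$ on $\Omega\cup V$; then, for $g\in C(\partial\Omega)$, choose Lipschitz $f_i$ with $g-1/i\le f_i\le f_{i+1}\le g$, use the comparison principle to get $|u_{f_i}-u_{f_j}|\le 1/i$ uniformly, and pass to the uniform limit of the continuous functions $u_{f_i}$, which agrees with $H_g$ on $\Omega$.

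A secondary difference worth noting: you try to handle a possible exterior piece $B\setminus\overline\Omega$ by extending by a constant $c$, with an ad hoc compatibility condition. The paper instead shows this piece is empty. Since $C_\phix(B\cap\partial\Omega)=0$ forces $C_p(B\cap\partial\Omega)=0$, a connectedness lemma (\cite[Lemma~6.5]{AdaBB14}) yields that $B\setminus\partial\Omega$ is connected, hence $B\setminus\overline\Omega=\varnothing$; consequently $\Omega\cup V$ is open and $V$ is relatively closed in it. This lets Lemma~\ref{lem:extension} be applied once with $\Omega\cup V$ as the ambient set and $V$ as $F$, eliminating your constant-choice issue altogether.
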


\begin{proof}
(a) $\Rightarrow$ (b)
Follows directly from definition of semiregularity.

(b) $\Rightarrow$ (c)
Follows from the Kellogg property (Theorem \ref{thm:main}).

(c) $\Rightarrow$ (d)
Suppose that $f,g \in C(\partial\Omega)$ and $f = g$ on $\partial\Omega \setminus V$.
Since $C_\phix(V) = 0$, it follows that both $f \leq g$ and $g \leq f$ hold quasieverywhere on $\partial\Omega$.
It then follows from Corollary \ref{cor:comparison_cont_bound} that both $H_f \leq H_g$ and $H_g \leq H_f$ on $\Omega$, hence $H_f = H_g$.

(d) $\Rightarrow$ (b)
Let $x_0 \in V$.
Since $V$ is relatively open, there exists $r>0$ with $B(x_0,r) \cap \partial\Omega \subset V$.
Define $f\in\lip(\Rn)$ by $f(y) := (1-\frac{d(x_0,y)}{r})_+$.
Then $f = 0$ on $\partial\Omega \setminus V$, and it follows by our assumption that $H(f) = H(0) = 0$. Since $f(x) = 1$, it follows that $x$ is irregular.

(c) $\Rightarrow$ (a)
Let $x_0\in V$ and let $G$ be an open neighbourhood of $x_0$ such that $G\cap\partial\Omega\subset V$.
By \cite[Proposition~4.2]{BarHH18} $C_p(G\cap\partial\Omega)=0$, where $p>1$ is the exponent from (aInc).
Now \cite[Lemma~6.5]{AdaBB14} (with $p(x) = p$) implies that $G\setminus\partial\Omega$ is connected.
Since $G\setminus\partial\Omega \subset (G\cap\Omega)\cup(G\setminus\overline{\Omega})$ and $G\cap\Omega\neq\varnothing$, connectedness implies that $G\setminus\overline{\Omega}=\varnothing$.
Now
\[
G = (G\setminus\partial\Omega)\cup (G\cap\partial\Omega)\subset(G\cap\Omega)\cup V\subset \Omega\cup V.
\]
Since $x_0$ was arbitrary, this implies that $\Omega\cup V$ is open.
Moreover, since $V\subset\partial\Omega$, $V$ is relatively closed in $\Omega\cup V$.

Let $f\in\lip(\partial\Omega)$.
By Theorem \ref{thm:H(f)}, $H(f)$ is continuous, hence $H(f)$ is lsc-regularized.
By Lemma \ref{lem:extension}, $H(f)$ has a bounded lsc-regularized extension $u_f$ to $\Omega\cup V$, such that $u_f$ is a superminimizer in $\Omega\cup V$.
Lemma \ref{lem:extension} applied to $-H(f)=H(-f)$ gives an extension $u_{-f}$, and then $-u_{-f}$ is an extension of $H(f)$ that is a subminimizer $\Omega\cup V$.
By \cite[Lemma~4.1]{BarHH18}, $|V|=0$, and it follows that $u_f=-u_{-f}$ almost everywhere in $\Omega\cup V$.
Hence $u_f$ is an lsc-regularized minimizer in $\Omega\cup V$.
By \cite[Theorem~5.8]{Kar_pp} $u_f$ is continuous in an open set $\Omega \cup V$, and it follows that the limit
\[
\lim_{\Omega\ni y\to x_0} H(f)(y)=\lim_{\Omega\cup V\ni y\to x_0} u_f(y)=u_f(x_0)
\]
exists for every $x_0\in V$.

Let then $g\in C(\partial\Omega)$.
Let $f_i \in \lip(\Omega)$ be sequence such that $g-\frac{1}{i} \leq f_i \leq g$ and $f_i\leq f_{i+1}$ on $\partial\Omega$.
For any $x\in\Omega$ and $j>i$, comparison principle (Lemma \ref{lem:comparison_principle}) implies that
\[
H(f_i)(x) \leq H(f_j)(x) \leq H_g(x) \leq H(f_i)(x)+\frac{1}{i}.
\]
Hence $H(f_i)$ converges  uniformly to $H_g$ in $\Omega$, and this implies that $H_g$ is continuous in $\Omega$.
Let $u_{f_i}$ be the extension of $H(f_i)$ to $\Omega\cup V$ given by Lemma \ref{lem:extension}.
We have already shown that for any $x\in\Omega$ and $j>i$
\[
u_{f_i}(x) \leq u_{f_j}(x) \leq u_{f_i}(x)+\frac{1}{i}.
\]
Since $u_{f_i}$ are continuous in $\Omega\cup V$ and $V\subset\partial\Omega$, these inequalities hold also for every $x\in V$.
This implies that the unctions $u_{f_i}$ converge  uniformly to a continuous function $u$.
Since $u=H_g$ in $\Omega$, for any $x_0\in V$ we have
\[
\lim_{\Omega\ni y\to x_0} H_g(y)=\lim_{\Omega\cup V\ni y\to x_0} u(y)=u(x_0).
\]
The only thing left is to show that every point of $V$ is irregular.
But this follows from the already proven implication (c) $\Rightarrow$ (b).
\end{proof}

Using the previous lemma we now give some characterizations of semiregular boundary points (cf. \cite[Theorem~8.4]{AdaBB14}).

\begin{thm}\label{thm:semireg}
Let $\phi \in \Phi_c(\Rn)$ be strictly convex and satisfy (A0), (A1), (A1-n), (aDec) and (aInc). Let $x_0 \in \partial\Omega$, $\delta > 0$ and $d(y) := d(x_0,y)$. Then the following are equivalent:
\begin{itemize}
\item[(A)] The point $x_0$ is semiregular.
\item[(B)] For some positive integer $k$
\[
\lim_{\Omega \ni y \to x_0} H(kd)(y) > 0.
\]
\item[(C)] For some positive integer $k$,
\[
\liminf_{\Omega \ni y \to x_0} H(kd)(y) > 0.
\]
\item[(D)] There is no sequence $\{y_i\}$, such that $\Omega \ni y_i \to x_0$, and
\[
\lim_{i \to \infty} H_f(y_i) = f(x_0) \quad \text{for all } f \in C(\partial\Omega).
\]
\item[(E)] It is true that $x_0 \notin \overline{\{x \in \partial\Omega: x\text{ is regular}\}}$.
\item[(F)] There is a neighbourhood $V$ of $x_0$, such that $C_\phix(V \cap \partial\Omega) = 0$.
\item[(G)]  There is a neighbourhood $V$ of $x_0$ such that, for every $f,g \in C(\partial\Omega)$, if $f = g$ on $\partial\Omega \setminus V$, then $H_f = H_g$.
\item[(H)] The point $x_0$ is semiregular with respect to $G := \Omega \cap B(x_0, \delta)$.
\end{itemize}
\end{thm}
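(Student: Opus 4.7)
The plan is to prove all equivalences through the main cycle (F) $\Rightarrow$ (A) $\Rightarrow$ (B) $\Rightarrow$ (C) $\Rightarrow$ (E) $\Rightarrow$ (F), with (D) attached via (A) $\Rightarrow$ (D) $\Rightarrow$ (C), while (G) and (H) are reduced directly to (F). Lemma~\ref{lem:semireg} and the Kellogg property (Theorem~\ref{thm:main}) handle the capacity-based conditions, and Corollary~\ref{cor:comparison_cont_bound} together with the translation identity $H_{g+c} = H_g + c$ and the symmetry $H_{-kd} = -H(kd)$ handle the conditions involving $H_f$.

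The main cycle proceeds as follows. For (F) $\Rightarrow$ (A), observe that $V\cap\partial\Omega$ is relatively open in $\partial\Omega$ and apply Lemma~\ref{lem:semireg}, (c) $\Rightarrow$ (a). For (A) $\Rightarrow$ (B), the key step is to show that if $\lim_{y\to x_0} H(kd)(y) = 0$ for \emph{every} $k$, then $x_0$ is regular, contradicting semiregularity. Indeed, given $f \in C(\partial\Omega)$ and $\epsilon > 0$, continuity of $f$ at $x_0$ together with the boundedness of $\partial\Omega$ yields $k \geq 1$ with $|f - f(x_0)| \leq \epsilon + kd$ on $\partial\Omega$; then Corollary~\ref{cor:comparison_cont_bound}, together with the translation and symmetry identities, gives $|H_f - f(x_0)| \leq \epsilon + H(kd)$ in $\Omega$, whence $\lim_{y\to x_0} H_f(y) = f(x_0)$. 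Since $H(kd) \geq H(0) = 0$ by comparison, the resulting nonzero limit is strictly positive, proving (B). The step (B) $\Rightarrow$ (C) is trivial. For (C) $\Rightarrow$ (E), argue contrapositively: if $z_i \in \partial\Omega$ is a sequence of regular points with $z_i \to x_0$, pick $y_i \in \Omega$ with $|y_i - z_i| < 1/i$ and $H(kd)(y_i) < kd(z_i) + 1/i$; then $y_i \to x_0$ and $H(kd)(y_i) \to 0$, contradicting $\liminf > 0$. For (E) $\Rightarrow$ (F), a neighbourhood $V$ of $x_0$ disjoint from the closure of regular points has $V \cap \partial\Omega$ contained in the irregular set, which has zero capacity by Theorem~\ref{thm:main}.

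For the remaining implications, (A) $\Rightarrow$ (D) is immediate, since if $\lim H_{f_0}(y) \neq f_0(x_0)$ for some $f_0$, no sequence $y_i \to x_0$ can satisfy the condition in (D). For (D) $\Rightarrow$ (C), the contrapositive uses a diagonal choice $|y_i - x_0| < 1/i$ and $H(id)(y_i) < 1/i$; Lemma~\ref{lem:comparison_principle} then gives $H(kd)(y_i) \leq H(id)(y_i) \to 0$ for every fixed $k$, and the same $\epsilon$-comparison as in (A) $\Rightarrow$ (B) yields $H_f(y_i) \to f(x_0)$ for every $f \in C(\partial\Omega)$, contradicting (D). The equivalence (F) $\Leftrightarrow$ (G) is exactly (c) $\Leftrightarrow$ (d) of Lemma~\ref{lem:semireg} applied to $W := V \cap \partial\Omega$. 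Finally, (F) $\Leftrightarrow$ (H) follows from the locality of the capacity condition: for $V \subset B(x_0,\delta)$ one has $V \cap \partial G = V \cap \partial\Omega$, so (F) at $x_0$ holds for $\Omega$ if and only if it holds for (the relevant connected component of) $G$, and the already established equivalence (F) $\Leftrightarrow$ (A) applied to $G$ completes the argument. The main technical hurdle is the $\epsilon$-comparison that converts pointwise control of the Lipschitz-boundary solutions $H(kd)$ into pointwise control of the Perron-type solutions $H_f$ for arbitrary continuous boundary data; the rest is organizational bookkeeping.
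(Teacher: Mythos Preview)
Your proposal is correct and follows essentially the same approach as the paper: the same comparison/translation estimate $|H_f - f(x_0)| \le \epsilon + H(kd)$, the Kellogg property, and Lemma~\ref{lem:semireg} do all the real work in both proofs. The only difference is organizational: the paper runs the single chain (A)$\Rightarrow$(B)$\Rightarrow$(C)$\Rightarrow$(D)$\Rightarrow$(E)$\Leftrightarrow$(F)$\Leftrightarrow$(G)$\Rightarrow$(A) (with (F)$\Leftrightarrow$(H)), treating (C)$\Rightarrow$(D) as trivial and putting the diagonal-sequence argument into (D)$\Rightarrow$(E), whereas you prove (C)$\Rightarrow$(E) directly and attach (D) via (A)$\Rightarrow$(D)$\Rightarrow$(C); this is a harmless rearrangement and your (C)$\Rightarrow$(E) is in fact slightly cleaner.
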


\begin{proof}
(A) $\Rightarrow$ (B)
We prove this by contraposition. Suppose therefore that
\[
\lim_{\Omega \ni y \to x_0} H(kd)(y) = 0
\]
for all positive integers $k$.
Let $f \in C_0^\infty(\Rn) \subset \lip(\partial\Omega)$, $a > f(x_0)$, and $r > 0$ be such that $f < a$  on $B(x_0,r)$.
Denote $m := \sup_{\partial\Omega}(f-a)_+$, and let $j > m/r$ be a positive integer.
Then
\[
f \leq a + \frac{md}{r} \leq a+ jd
\]
on $\partial\Omega$.
It follows from Corollary \ref{cor:comparison_cont_bound} that
\[
\limsup_{\Omega \ni y \to x_0} H(f)(y) \leq \limsup_{\Omega \ni y \to x_0} H(a + jd)(y) = \limsup _{\Omega \ni y \to x_0} (a+H(jd)(y)) = a.
\]
Letting $a \to f(x_0)$ shows that $\limsup_{\Omega \ni y \to x_0} H(f)(y) \leq f(x_0)$. Replacing $f$ with $-f$ in the calculations above gives us
\[
-\limsup_{\Omega \ni y \to x_0} H(-f)(y) \geq -(-f(x_0)) = f(x_0).
\]
Hence we have $\lim_{\Omega \ni y \to x_0} H(f)(y) = f(x_0)$. Since $f \in C_0^\infty(\Rn)$ was arbitrary, it follows from \cite[Proposition~6.5]{HarH19} that $x_0$ is regular, thus $x_0$ is not semiregular, and (A) does not hold.

(B) $\Rightarrow$ (C) $\Rightarrow$ (D)
These implications are trivial.

(D) $\Rightarrow$ (E)
We prove this by contraposition.
Suppose therefore that $x_0$ belongs to the closure of regular boundary points.
For each positive integer $k$, the intersection $B(x_0,k^{-2}) \cap \partial\Omega$ contains a regular boundary point $x_k$.
Let $f_k := kd \in \lip(\partial\Omega)$.
Then we can find $y_k \in B(x_0,k^{-1}) \cap \Omega$ with $|f_k(x_k) - H(f_k)(y_k)| < k^{-1}$. Then $y_k \to x_0$ and, since $0 \leq f(x_k) \leq k^{-1}$, we have $H(f_k)(y_k) \leq 2k^{-1}$.

Let then $f \in C(\partial\Omega)$, and assume without loss of generality that $f(x_0) = 0$.
Choose $m$ such that $|f| \leq m <\infty$ and let $\ve > 0$.
We can find $r > 0$ such that $|f| < \ve$ on $B(x_0,r^{-1}) \cap \partial\Omega$.
For $k \geq mr$ we have $f_k \geq m$ on $\partial\Omega \setminus B(x_0,r^{-1})$, hence $|f| \leq f_k + \ve$ on $\partial\Omega$.
It follows from Corollary \ref{cor:comparison_cont_bound} that for $k \geq mr$ we have
\[
H_f(y_k) \leq H(f_k)(y) + \ve \leq 2k^{-1} + \ve
\]
and
\[
H_f(y_k) \geq -H(f_k)(y) + (-\ve) \geq -2k^{-1} - \ve.
\]
Hence $-\ve \leq \lim_{k \to \infty} H_f(y_k) \leq \ve$. Since $\ve$ was arbitrary, the limit must be equal to 0, and it follows that (D) does not hold

(E) $\Leftrightarrow$ (F)
Note that (E) is equivalent to the existence of a neighbourhood $V$ of $x_0$, such that every point of $V$ is irregular. The equivalence of (E) and (F) thus follows from the equivalence (b) $\Leftrightarrow$ (c) in Lemma \ref{lem:semireg}, when we replace $V$ in \ref{lem:semireg} with $V \cap \partial\Omega$ here.

(F) $\Leftrightarrow$ (G) $\Rightarrow$ (A)
These implications follow from Lemma \ref{lem:semireg} when we replace $V$ in \ref{lem:semireg} with $V \cap \partial\Omega$ here.

(F) $\Leftrightarrow$ (H)
The assumption (F) is equivalent to the existence of a neighbourhood $W$ of $x_0$ with $C_\phix(W \cap \partial G) = 0$. This is equivalent to (H), which can bee seen by we applying the equivalence (F) $\Leftrightarrow$ (A) to the set $G$.
\end{proof}


\end{document}